\documentclass[a4paper,12pt,reqno]{amsart}
\usepackage{amsfonts}
\usepackage{amsmath}
\usepackage{amssymb}
\usepackage[a4paper]{geometry}
\newtheorem{theorem}{Theorem}
\newtheorem{definition}{Definition}

\newtheorem{remark}{Remark}

\usepackage{comment}
\usepackage{mathrsfs}
\usepackage{csquotes}
\usepackage{enumitem}
\usepackage{dirtytalk}

\usepackage[colorlinks]{hyperref}
\renewcommand\eqref[1]{(\ref{#1})} 
%
%
\setlength{\textwidth}{15.2cm}
\setlength{\textheight}{22.7cm}
\setlength{\topmargin}{0mm}
\setlength{\oddsidemargin}{3mm}
\setlength{\evensidemargin}{3mm}
\setlength{\footskip}{1cm}


\numberwithin{equation}{section}
\theoremstyle{plain}

\theoremstyle{definition}

\begin{document}
  \title[Cylindrical Hardy, Sobolev type and Caffarelli-Kohn-Nirenberg]
 {Cylindrical Hardy, Sobolev type and Caffarelli-Kohn-Nirenberg type inequalities and identities}

\author[M. Kalaman]{Madina Kalaman}
\address{
  Madina Kalaman:
  \endgraf
  SDU University, Kaskelen, Kazakhstan
  \endgraf
  and 
  \endgraf
  Institute of Mathematics and Mathematical Modeling, Almaty, Kazakhstan
    \endgraf
  {\it E-mail address} {\rm madina.kalaman22@gmail.com}
  }

\author[N. Yessirkegenov]{Nurgissa Yessirkegenov}
\address{
  Nurgissa Yessirkegenov:
  \endgraf
  SDU University, Kaskelen, Kazakhstan
  \endgraf
  and 
  \endgraf
  Institute of Mathematics and Mathematical Modeling, Almaty, Kazakhstan
    \endgraf
  {\it E-mail address} {\rm nurgissa.yessirkegenov@gmail.com}
  }

\thanks{This research is funded by the Science Committee of the Ministry of Science and Higher Education of the Republic of Kazakhstan (Grant No. AP23490970).}

     \keywords{Cylindrical Hardy inequality, cylindrical Caffarelli-Kohn-Nirenberg inequality, stratified Lie group, sub-Laplacian, homogeneous Lie group}
     \subjclass[2020]{26D10, 35A23, 46E35, 22E30}

     \begin{abstract} 
     In this paper we discuss cylindrical extensions of improved Hardy, Sobolev type and Caffarelli-Kohn-Nirenberg type inequalities with sharp constants and identities in the spirit of Badiale-Tarantello \cite{BT}. All identities are obtained in the setting of $L^p$ for all $p\in (1,\infty)$ without the real-valued function assumption. The obtained identities provide a simple
and direct understanding of these inequalities as well as the nonexistence
of nontrivial extremizers. As a byproduct, we show extended Caffarelli-Kohn-Nirenberg type inequalities with remainder terms that imply a cylindrical extension of the classical Heisenberg-Pauli-Weyl uncertainty principle. Furthermore, we prove $L^p$-Hardy type identities with logarithmic weights that imply the critical Hardy inequality in the special case. Lastly, we also discuss extensions of these results on homogeneous Lie groups. Particular attention is paid to stratified Lie groups, where the functional inequalities become intricately intertwined with the properties of sub-Laplacians and related hypoelliptic partial differential equations. The obtained results already imply new insights even in the classical Euclidean setting with respect to the range of parameters and the arbitrariness of the choice of any homogeneous quasi-norm. 

     \end{abstract}
     \maketitle

\section{Introduction}\label{eq:1}

One of the main aims of the paper is inspired by the work of Badiale-Tarantello \cite{BT}, namely by the following extended Hardy inequality: Let $x=\left({x^{\prime}}, x^{\prime \prime}\right) \in {\mathbb{R}^k} \times \mathbb{R}^{n-k}$, $2 \leq k \leq n$ and $1 \leq p<k$. Then, there exists a positive constant $C_{n, k, p}$ such that
\begin{equation}\label{B-T}
\left\|\frac{f}{{|x^{\prime}|_k}} \right\|_{L^p\left(\mathbb{R}^n\right)} \leq C_{n, k, p}\|\nabla f\|_{L^p\left(\mathbb{R}^n\right)},
\end{equation}
where $\nabla$ is the full gradient on ${\mathbb{R}^{n}}$ and $|x'|_k$ is the Euclidean norm on {$\mathbb{R}^{k}$}. There, in \cite{BT} the best constant $C_{k, p}=\frac{p}{k-p}$ in \eqref{B-T} was conjectured and then verified in \cite{SDM} and recently in \cite{RS17_strat} by a different method. We also refer to \cite{MS04} for extremizers of cylindrical Hardy-Sobolev inequalities as well as to \cite{Lam18}, \cite{DN20}, \cite{DP21} for cylindrical Hardy inequalities with Bessel pairs. Badiale and Tarantello in \cite{BT} showed an application of such cylindrical Hardy type inequalities in investigating existence and non-existence of cylindrical solutions
for a nonlinear elliptic equation, which has been proposed by  Bertin \cite{Ber00} and Ciotti \cite{Cio01} as a model for
the dynamics of galaxies.

Maz'ya in \cite[Sect. 2.1.7, Corollary 3]{Maz11} obtained the extended Hardy inequality \eqref{B-T} with a remainder term when $p=2$: Let $n>2,2<q<\frac{2n}{n-2}$, and $\gamma=-1+n\left(\frac{1}{2}-\frac{1}{q}\right)$. Then there exists a positive constant $c$ such that
\begin{equation}\label{Maz_ineq}
\left(\frac{k-2}{2}\right)^{2}\left\|\frac{f}{|x'|}\right\|_{L^{2}(\mathbb{R}^{n})}^{2}\leq \|\nabla f\|^{2}-c\left(\int_{\mathbb{R}^{n}}|u|^q|x'|_{k}^{\gamma q} \mathrm{~d} x\right)^{\frac{2}{q}}
\end{equation}
holds for any $f \in C_0^{\infty}\left(\mathbb{R}^{n}\right)$, subject to the condition $f(0, x'')=0$ in the case $k=1$. 

In this direction, in particular, we discuss improved versions of \eqref{B-T} with $\frac {x' \cdot \nabla_k f}{|x'|_{k}}$ instead of $\nabla f$ and with an explicit remainder term, where $\nabla_{k}=(\partial_{x_{1}},\ldots,\partial_{x_{k}})$. Actually, we obtain the following weighted $L^p$ identity that implies the improved version: Let $1<p<\infty$. Then, for all complex-valued functions $f\in C_0^\infty(\mathbb{R}^{n} \backslash \{x'=0\})$ on $\mathbb{R}^k \times \mathbb{R}^{n-k}$ we have
\begin{multline}\label{Lp_Hardy}
\left|{\frac{k-\alpha}{p}}\right|^{p}\left\|\frac{f}{|x'|_{k}^{\frac{\alpha}{p}}}\right\|_{L^{p}(\mathbb{R}^{n})}^p =\left\|\frac {x' \cdot \nabla_k}{|x'|_{k}^{\frac{{\alpha}}{p}}}f\right\|_{{L}^{p}(\mathbb{R}^{n})}^p\\-\int_{\mathbb{R}^{n}}C_{p}\left(\frac{x'\cdot\nabla_{k}}{|x'|_{k}^{\frac{\alpha}{p}}}f, \frac{x'\cdot\nabla_{k}}{|x'|_{k}^{\frac{\alpha}{p}}}f+\frac{k-\alpha}{p}\frac{f}{|x'|_{k}^{\frac{\alpha}{p}}}\right)dx,
\end{multline}
where \begin{equation}\label{Cp_}
C_{p}(\xi,\eta):=|\xi|^{p}-|\xi-\eta|^{p}-p|\xi-\eta|^{p-2}\operatorname{Re}(\xi-\eta)\cdot \overline{\eta}.
\end{equation}
It is known that $C_{p}\geq 0$ from \cite[Step 3 of Proof of Lemma 3.4]{CKLL24}. We also show that when $k\neq\alpha$ the constant $\left|{\frac{k-\alpha}{p}}\right|^{p}$ is sharp in the inequality derivied from \eqref{Lp_Hardy} after dropping the last remainder term.  

Applying the Schwarz inequality and using $|\nabla_{k}f|\leq |\nabla f|$ we can derive from \eqref{Lp_Hardy} the following (weighted) cylindrical extended Hardy inequality with an explicit remainder term:
\begin{multline}\label{Lp_Hardy_rem}
\left|{\frac{k-\alpha}{p}}\right|^{p}\left\|\frac{f}{|x'|_{k}^{\frac{\alpha}{p}}}\right\|_{L^{p}(\mathbb{R}^{n})}^p \leq \left\|\frac {\nabla f}{|x'|_{k}^{\frac{{\alpha}}{p}-1}}\right\|_{{L}^{p}(\mathbb{R}^{n})}^p\\-\int_{\mathbb{R}^{n}}C_{p}\left(\frac{x'\cdot\nabla_{k}}{|x'|_{k}^{\frac{\alpha}{p}}}f, \frac{x'\cdot\nabla_{k}}{|x'|_{k}^{\frac{\alpha}{p}}}f+\frac{k-\alpha}{p}\frac{f}{|x'|_{k}^{\frac{\alpha}{p}}}\right)dx.
\end{multline}
Note that when $\alpha=p$ dropping the last term above implies the extended Hardy inequality \eqref{B-T}. Also, the identity \eqref{Lp_Hardy} extends the results \cite[Corollary 1.1 and 1.2]{DP21} from $L^{2}$ to general $L^{p}$. There the authors used the factorization method that cannot be applied in the $L^{p}$ setting. It is worth mentioning that there are many works on $L^{p}$-Hardy identities for real-valued functions. However, as for the $L^{p}$-Hardy identities for complex-valued functions, to the best of our knowledge, only recently the paper \cite[Lemma 3.3]{CKLL24} introduced \eqref{Lp_Hardy} with the full gradient $\nabla f$ instead of $x' \cdot \nabla_k$ when $\alpha=p$ and $k=n$. 

Moreover, dropping the last term in the identity \eqref{Lp_Hardy} when $k=n$ implies the Hardy inequalities for the radial derivative operator $df/d|x|_{E}$ from e.g. \cite{MOW13} and \cite{MOW19}, where $|x|_{E}$ is the Euclidean norm on $\mathbb{R}^{n}$. Namely, when $k=n$ taking into account $$\frac{{x'} \cdot\nabla_k f}{|x'|_k}=\frac{{x} \cdot\nabla f}{|x|_E}=\frac{df}{d|x|_E},$$ we derive from \eqref{Lp_Hardy} the Hardy identity for all $f\in C_0^\infty(\mathbb{R}^{n} \backslash \{x'=0\})$ with the radial derivative operator 
\begin{multline}\label{Lp_Hardy_radial}
\left|{\frac{n-\alpha}{p}}\right|^{p}\left\|\frac{f}{|x|_{E}^{\frac{\alpha}{p}}}\right\|_{L^{p}(\mathbb{R}^{n})}^p =\left\|\frac{1}{|x|_{E}^{\frac{\alpha}{p}-1}}\frac {df}{d|x|_{E}}\right\|_{{L}^{p}(\mathbb{R}^{n})}^p\\-\int_{\mathbb{R}^{n}}C_{p}\left(\frac{1}{|x|_{E}^{\frac{\alpha}{p}-1}}\frac {df}{d|x|_{E}}, \frac{1}{|x|_{E}^{\frac{\alpha}{p}-1}}\frac {df}{d|x|_{E}}+\frac{n-\alpha}{p}\frac{f}{|x|_{E}^{\frac{\alpha}{p}}}\right)dx,
\end{multline}
which implies the following improved version of the classical Hardy inequality when $\alpha=p$:
\begin{equation}\label{L2_Hardy_improved}
\left({\frac{{n}-{p}}{p}}\right)^{p}{{\left\|\frac{f}{|x|_E}\right\|_{L^{p}(\mathbb{R}^{n})}^p \leq \left\|\frac{df}{d|x|_E}\right\|_{{L}^{p}(\mathbb{R}^{n})}^p}}\leq \|\nabla f\|^{p}_{L^{p}(\mathbb{R}^{n})}.
\end{equation}

Also, the identity \eqref{Lp_Hardy} implies the Sobolev type inequality from \cite{OS09} (see also \cite{BEHL08}) after dropping the remainder term when $\alpha=0$ and $k=n$. Therefore, the identity \eqref{Lp_Hardy} can be thought as a sharp remainder formula for the cylindrical weighted Sobolev type inequality.  

Furthermore, in this paper we will discuss the following Hardy type identities with logarithmic weights on $\{x=(x',x'')\in \mathbb{R}^{k}\times\mathbb{R}^{n-k}:|x'|<R\}$ for any $R>0$ and for all complex-valued functions $f\in C_{0}^{\infty}(\{0<|x'|_{k}<R\})$:
\begin{multline}\label{Lp_logarithmic}
\left|\frac{\beta+p}{p}\right|^{p}\quad   \left\|\frac{f}{|x'|_{k}^{\frac{\alpha+p}{p}}\left(log\frac{R}{|x'|_{k}}\right)^{\frac{\beta+p+1}{p}}}\right\|_{L^{p}(0<|x'|_{k}<R)}^p=\left\|\frac {x' \cdot \nabla_{k}}{|x'|_{k}^{\frac{\alpha+p}{p}}\left(log\frac{R}{|x'|_{k}}\right)^{\frac{\beta+1}{p}}}f\right\|_{{L}^{p}(0<|x'|_{k}<R)}^p  \\-(k-\alpha-p)\left(\frac{\beta+p}{p}\right)\left|\frac{\beta+p}{p}\right|^{p-2}\int_{0<|x'|_{k}<R}\frac{|f|^p}{|x'|_{k}^{\alpha+p}\left(log\frac{R}{|x'|_{k}}\right)^{\beta+p}}dx
\\-\int_{0<|x'|_{k}<R}C_p\left(\xi,\eta\right)dx,   \end{multline}
where $$
\xi:=\frac{x'\cdot\nabla_{k}}{|x'|_{k}^{\frac{\alpha+p}{p}}\left(log\frac{R}{|x'|_{k}}\right)^{\frac{\beta+1}{p}}}f
$$
and
$$
\eta:=\frac{x'\cdot\nabla_{k}}{|x'|_{k}^{\frac{\alpha+p}{p}}\left(log\frac{R}{|x'|_{k}}\right)^{\frac{\beta+1}{p}}}f+\frac{\beta+p}{p}\frac{f}{|x'|_{k}^{\frac{\alpha+p}{p}}\left(log\frac{R}{|x'|_{k}}\right)^{\frac{\beta+p+1}{p}}}.
$$
The special case $p=2$, $\beta=-1$ and $\alpha=k-2$ of \eqref{Lp_logarithmic} was obtained in \cite[Corollary 1.3]{DP21} by the factorization method, which does not work in general $L^{p}$. Also, note that when $k=n$ dropping the last term in \eqref{Lp_logarithmic} implies \cite[Theorem 3]{Sano_arxiv}. So, in the special case $k=n$, the identity \eqref{Lp_logarithmic} gives a sharp remainder formula for \cite[Theorem 3]{Sano_arxiv}. When $\alpha=0$ and $\beta=-1$ dropping the last two terms in \eqref{Lp_logarithmic} we derive the following inequality
$$\left\|\frac {x' \cdot \nabla_{k}}{|x'|_{k}}f\right\|_{{L}^{p}(0<|x'|_{k}<R)}^p \geq \left|\frac{p-1}{p}\right|^{p}\quad   \left\|\frac{f}{|x'|_{k}log\frac{R}{|x'|_{k}}}\right\|_{L^{p}(0<|x'|_{k}<R)}^p,$$
which goes to the critical Hardy inequality as $p\rightarrow n$ when $k=n$. According to \cite{Sano_arxiv} the last inequality has a scale invariance structure. We also refer to \cite{II14} and \cite{BT19} for discussions on the non-attainability of the best constant in the last inequality when $p=k=n$. 

By using the obtained identities we give a characterization of the class of functions
that makes the remainder term vanish, which allows us to show nonexistence
of nontrivial extremizers in the corresponding inequalities. 

As an application, we show extended Caffarelli-Kohn-Nirenberg (CKN) type inequalities with remainder terms, which immediately imply the extended CKN inequalities from \cite{RSY18}, \cite{RSY17} or \cite{RSY17_strat}. Nowadays, there are many works concerning various versions of the Caffarelli-Kohn-Nirenberg type inequalities and applications, see e.g. \cite{ChZh, DEFT, DEL, D, LL} and the recent paper \cite{CFLL22}.

To put the extended CKN type inequalities in perspective, let us first recall the classical version from \cite{CKN}:
\begin{theorem}\cite{CKN}\label{clas_CKN} Let $n\in\mathbb{N}$ and let $p$, $q$, $r$, $a$, $b$, $d$, $\delta\in \mathbb{R}$ such that $p,q\geq1$, $r>0$, $0\leq\delta\leq1$, and

\begin{equation}\label{clas_CKN0}
\frac{1}{p}+\frac{a}{n},\, \frac{1}{q}+\frac{b}{n},\, \frac{1}{r}+\frac{c}{n}>0,
\end{equation}
where $c=\delta d + (1-\delta) b$. Then there exists a positive constant $C$ such that
\begin{equation}\label{clas_CKN_ineq}
{\||x|_{E}^{c}f\|_{L^{r}(\mathbb{R}^n)}\leq C \||x|_{E}^{a}|\nabla f|\|^{\delta}_{L^{p}(\mathbb{R}^n)} \||x|_{E}^{b}f\|^{1-\delta}_{L^{q}(\mathbb{R}^n)}},
\end{equation}
holds for all $f\in C_{0}^{\infty}(\mathbb{R}^n)$, if and only if the following conditions hold:
\begin{equation}
\frac{1}{r}+\frac{c}{n}=\delta \left(\frac{1}{p}+\frac{a-1}{n}\right)+(1-\delta)\left(\frac{1}{q}+\frac{b}{n}\right),
\end{equation}
\begin{equation}
a-d\geq 0 \quad {\rm if} \quad \delta>0,
\end{equation}
\begin{equation}
a-d\leq 1 \quad {\rm if} \quad \delta>0 \quad {\rm and} \quad \frac{1}{r}+\frac{c}{n}=\frac{1}{p}+\frac{a-1}{n}.
\end{equation}
\end{theorem}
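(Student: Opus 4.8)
Since Theorem~\ref{clas_CKN} is the classical inequality of \cite{CKN}, I would regard it as known and only sketch how a self-contained proof is organized, separating the necessity of the listed conditions from their sufficiency. For necessity the plan is to exploit dilation invariance: substituting $f(\lambda\,\cdot)$ into \eqref{clas_CKN_ineq} and using $\nabla(f(\lambda x))=\lambda(\nabla f)(\lambda x)$, each weighted norm acquires an explicit power of $\lambda$, so that validity of the inequality as $\lambda\to0$ and $\lambda\to\infty$ forces these powers to match — which is precisely the scaling identity $\frac1r+\frac cn=\delta(\frac1p+\frac{a-1}{n})+(1-\delta)(\frac1q+\frac bn)$. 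The one-sided constraints $a-d\ge0$ (needed when $\delta>0$) and $a-d\le1$ (needed when $\delta>0$ and $\frac1r+\frac cn=\frac1p+\frac{a-1}{n}$) I would recover by testing \eqref{clas_CKN_ineq} on power functions $|x|_E^{-s}$ smoothly truncated near the origin and near infinity and comparing the rates at which the two sides diverge as the truncation degenerates; the strict inequalities in \eqref{clas_CKN0} are exactly what make the relevant test integrals finite.

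For sufficiency the plan is to reduce everything to the model case $\delta=1$. When $0<\delta<1$ (the cases $\delta\in\{0,1\}$ being respectively trivial and the model case itself), the pointwise identity $|x|_E^{c}|f|=(|x|_E^{d}|f|)^{\delta}(|x|_E^{b}|f|)^{1-\delta}$ — legitimate because $c=\delta d+(1-\delta)b$ — together with H\"older's inequality yields $\||x|_E^{c}f\|_{L^{r}}\le\||x|_E^{d}f\|_{L^{r_1}}^{\delta}\||x|_E^{b}f\|_{L^{q}}^{1-\delta}$ with $\frac1{r_1}:=\frac1\delta(\frac1r-\frac{1-\delta}{q})$. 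A short computation shows that, given the scaling identity, the corresponding balance for the pair $(d,r_1)$ against $(a,p)$, namely $\frac1{r_1}+\frac dn=\frac1p+\frac{a-1}{n}$, holds automatically, and that $r_1\in[1,\infty)$ under the remaining hypotheses; hence it remains to prove the model weighted Sobolev--Hardy inequality $\||x|_E^{d}f\|_{L^{r_1}}\le C\||x|_E^{a}\nabla f\|_{L^{p}}$ for $a-1\le d\le a$ and this balance. That inequality I would split into the Hardy regime $r_1=p$ — the classical weighted Hardy inequality, obtained by integrating by parts against a radial vector field $x\mapsto x|x|_E^{\gamma}$ with a suitable $\gamma$ and applying H\"older — and the Sobolev regime $r_1>p$, obtained by combining the Hardy regime with the unweighted Gagliardo--Nirenberg--Sobolev inequality through one further application of H\"older's inequality, with the balance relation again fixing the interpolation weight.

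I expect the genuine obstacle to lie in the endpoint bookkeeping: verifying that $r_1$ and the auxiliary H\"older exponents really do land in admissible ranges, and handling the critical case $a-d=1$, where the constant produced by a single integration by parts degenerates and the hypothesis $a-d\le1$ is sharp — there one typically has to run the estimate on dyadic annuli $\{2^{j}\le|x|_E<2^{j+1}\}$ and sum the local bounds with care, rather than integrate once. Keeping all constants finite throughout is exactly what the strict inequalities \eqref{clas_CKN0} secure.
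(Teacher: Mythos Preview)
The paper does not supply a proof of Theorem~\ref{clas_CKN}: it is quoted verbatim from \cite{CKN} in the Introduction purely as background, to contrast with the authors' own cylindrical CKN-type inequalities (Theorem~\ref{Theorem_CKN} and its relatives). You correctly recognize this, and your choice to treat it as known and only sketch the classical argument is entirely appropriate here.

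As for the sketch itself, the outline is sound and follows the standard route: dilation-invariance forces the scaling identity; truncated power-function test families detect the one-sided constraints $a-d\ge0$ and $a-d\le1$; and on the sufficiency side, the H\"older reduction to $\delta=1$ followed by the weighted Hardy/Sobolev endpoint argument is exactly how the original proof is organized. Your remarks on the endpoint bookkeeping and the need for a dyadic-annulus argument at the critical exponent $a-d=1$ are accurate caveats. Since the paper offers no proof to compare against, there is nothing further to reconcile.
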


In order to compare corresponding results of this paper with Theorem \ref{clas_CKN}, we now state here our CKN type inequalities with a remainder term on $\mathbb{R}^{k}\times \mathbb{R}^{n-k}$: Let $1<p,q<\infty$, $0<r<\infty$ with $p+q\geq r$, $\delta\in [0,1]\cap\left[\frac{r-q}{r},\frac{p}{r}\right]$ and $ \alpha,b,c \in \mathbb{R}$. Assume that 
$ \frac{\delta r}{p}+\frac{(1-\delta)r}{q} = 1, c=-\delta+b(1-\delta)$. Then for any complex-valued function $f \in C^{\infty}_{0}(\mathbb{R}^{n}\backslash\{x'=0\})$, we have   
\begin{multline}\label{Lp_CKN}
    \left|\frac{k-\alpha}{p}\right|^{\delta}\left\||x'|_{k}^{\frac{\alpha c}{p}}f\right\|_{{L^r} {(\mathbb{R}^{n})}}\leq
    \left[\left\|\frac {x' \cdot \nabla_{k}}{|x'|^{\frac{\alpha}{p}}_{k}}f\right\|_{{L^p}(\mathbb {R}^{n})}^{p}\right.\\\left.-\int_{\mathbb{R}^{n}}C_{p}\left( \frac{x'\cdot \nabla_{k}}{|x'|^{\frac{\alpha}{p}}_{k}}f, \frac{x'\cdot \nabla_{k}}{|x'|^{\frac{\alpha}{p}}_{k}}f+\frac{k-\alpha}{p}\frac{f}{|x'|^{\frac{\alpha}{p}}_{k}}\right)dx\right]^{\frac{\delta}{p}}
    \left\||x'|_{k}^{\frac{\alpha b}{p}}f\right\|_{{L^q}(\mathbb {R}^{n})}^{1-\delta}, 
\end{multline}
where the constant $\left|\frac{k-\alpha}{p}\right|^{\delta}$ with $\alpha\neq k$ is sharp when $q=p$ with $b=-1$ or $\delta=\overline{0,1}$.

Here, when $\alpha=p$ and $k=n$ dropping the last term on the right-hand side of \eqref{Lp_CKN} gives the following improved version for all complex-valued functions $f\in C_0^{\infty}(\mathbb{R}^n\backslash\{x'=0\})$ with an explicit constant:
\begin{equation}\label{CKN_improved}
\begin{split}
\||x|_{E}^{c}f\|_{{L^r}(\mathbb{R}^n)} &\leq \left|\frac{p}{n-p}\right|^{\delta}\left\|\frac{x\cdot \nabla f}{|x|_{E}}\right\|^{\delta}_{{L^p}(\mathbb{R}^n)}\left\||x|_{E}^{b}f\right\|_{{L^q}(\mathbb{R}^n)}^{1-\delta} \\& \leq \left|\frac{p}{n-p}\right|^{\delta}\left\|\nabla f\right\|^{\delta}_{{L^p}(\mathbb{R}^n)}\left\||x|_{E}^{b}f\right\|_{{L^q}(\mathbb{R}^n)}^{1-\delta},
\end{split}
\end{equation}
where in the last line we have used the Schwarz inequality. 

In the special case $p=q=r=2$, $b=-n/2$, $c=-\delta-n(1-\delta)/2$ the inequality \eqref{CKN_improved} takes the form
\begin{equation}\label{CKN_special_case}
\left\|\frac{f}{|x|_{E}^{\frac{2\delta+n(1-\delta)}{2}}}\right\|_{L^{2}(\mathbb{R}^n)}
\leq \left|\frac{2}{n-2}\right|^{\delta} \left\|\nabla f\right\|^{\delta}_{L^{2}(\mathbb{R}^n)}
\left\|\frac{f}{|x|_{E}^{\frac{n}{2}}}\right\|^{1-\delta}_{L^{2}(\mathbb{R}^n)}.
\end{equation}
Since we have $1/2+b/n=0$ here, we observe that the condition \eqref{clas_CKN0} is not satisfied, then the inequality \eqref{CKN_special_case} is not covered by the classical Caffarelli-Kohn-Nirenberg inequality, Theorem \ref{clas_CKN}. We also discuss the sharpness of the constant $\left|\frac{k-\alpha}{p}\right|^{\delta}$ in \eqref{Lp_CKN} in some special cases.

Furthermore, in the special case $c=0, r=2, b=1, q=p'=p/(p-1), \delta=1/2, \alpha=p$, dropping the remainder term, we obtain the following cylindrical extension of the Heisenberg-Pauli-Weyl uncertainty principle from \eqref{Lp_CKN}: 
    \begin{equation}\label{C1f}
\left|\frac{k-p}{p}\right| \int_{\mathbb{R}^{n}}|f|^2 d x \leq \left(\int_{\mathbb{R}^{n}}\left|\frac{x'\cdot\nabla_{k}}{|x'|} f\right|^p d x\right)^{\frac{1}{p}}\left(\int_{\mathbb{R}^{n}}|x'|^{p'}|f|^{p'} d x\right)^{\frac{1}{p'}}.
\end{equation}

We also discuss the above results on homogeneous Lie groups for the radial derivative operator with any homogeneous quasi-norm and on stratified Lie groups for a horizontal gradient with the Euclidean norm on the first stratum. These results in the Abelian case $\mathbb{G}=(\mathbb{R}^{n};+)$ imply the (weighted) improved $L^{p}$ Hardy identity \eqref{Lp_Hardy_radial}, the Hardy type identity with logarithmic weight \eqref{Lp_logarithmic} and the CKN inequality \eqref{Lp_CKN} above for the radial derivative operator $df/d|x|$ with respect to any homogeneous quasi-norm $|x|$.

In Section \ref{Preliminaries} we briefly recall all the necessary definitions and notations. Then, (weighted) improved $L^p$-Hardy inequalities and identities for all complex-valued functions $f\in C_{0}^{\infty}(\mathbb{R}^{n}\backslash \{x'=0\})$ on $\mathbb{R}^{k}\times\mathbb{R}^{n-k}$, stratified Lie groups and homogeneous Lie
groups are presented in Section \ref{section3}. In Section \ref{section4} we establish $L^p$-Hardy type inequalities and identities with logarithmic weights on $\mathbb{R}^{k}\times\mathbb{R}^{n-k}$, stratified Lie groups and homogeneous Lie
groups. Finally, applications of the obtained results to extended Caffarelli-Kohn-Nirenberg type and Heisenberg-Pauli-Weyl type uncertainty inequalities are discussed in Section \ref{section5}. 

\section{Preliminaries}\label{Preliminaries} 
In this section we briefly recall the necessary notations and definitions concerning the setting of homogeneous Lie groups following the books \cite{FS-book}, \cite{FR16} and \cite{RS-book}. Also, a few other facts needed for our analysis will be discussed.

\subsection{Stratified Lie groups}
Let us begin with stratified (or a homogeneous Carnot groups) Lie groups. 
\begin{definition}
A Lie group $\mathbb{G}=\left(\mathbb{R}^{n}, \cdot \right)$ is called stratified if it satisfies the conditions:
\begin{itemize}
    \item For some natural numbers $N+N_{2}+\cdots+N_{r}=n$, that is $N=N_{1}$, the decomposition $\mathbb{R}^{n}=\mathbb{R}^{N} \times \cdots \times \mathbb{R}^{N_{r}}$ is valid, and for every positive $\lambda>0$ the dilation $\delta_{\lambda}: \mathbb{R}^{n} \rightarrow \mathbb{R}^{n}$ defined by
$$
\delta_{\lambda}(x)=\delta_{\lambda}\left(x^{\prime}, x^{(2)}, \ldots, x^{(r)}\right):=\left(\lambda x^{\prime}, \lambda^{2} x^{(2)}, \ldots, \lambda^{r} x^{(r)}\right)
$$
is an automorphism of $\mathbb{G}$, where $x^{\prime} \equiv x^{(1)} \in \mathbb{R}^{N}$ and $x^{(k)} \in \mathbb{R}^{N_{k}}$ for $k=2, \ldots, r$.
\item Let $N$ be as defined above and let $X_{1}, \ldots, X_{N}$ be the left invariant vector fields on $\mathbb{G}$ with the property $X_{k}(0)=\left.\frac{\partial}{\partial x_{k}}\right|_{0}$ for $k=1, \ldots, N$. Then 
$$
\operatorname{rank}\left(\operatorname{Lie}\left\{X_{1}, \ldots, X_{N}\right\}\right)=n, \;\; \forall x \in \mathbb{R}^{n}.
$$
\end{itemize} 
Thus, we call the triple $\mathbb{G}=\left(\mathbb{R}^{n}, \cdot, \delta_{\lambda}\right)$ is a stratified Lie group.
\end{definition}

Recall that the left invariant vector fields $X_{1}, \ldots, X_{N}$ are called the (Jacobian) generators of $\mathbb{G}$ and $r$ is called a step of $\mathbb{G}$. The homogeneous dimension of $\mathbb{G}$ is defined by 
$$
Q=\sum_{k=1}^{r} k N_{k}, \quad N_{1}=N.
$$
In this setting, $d x$ denotes the Haar measure on a group $\mathbb{G}$, and it is known that the Haar measure on $\mathbb{G}$ is the standard Lebesgue measure for  $\mathbb{R}^{n}$ (see, e.g. \cite[Proposition 1.6.6]{FR16}).

We also recall that the left invariant vector fields $X_{j}$ have an explicit form and satisfy the divergence theorem, see e.g.  \cite[Section 3.1.5]{FR16} and \cite{RS}, 
\begin{equation}\label{divergence}
X_{k}=\frac{\partial}{\partial x_{k}^{\prime}}+\sum_{\ell=2}^{r} \sum_{m=1}^{N_{1}} a_{k, m}^{(\ell)}\left(x^{\prime}, \ldots, x^{\ell-1}\right) \frac{\partial}{\partial x_{m}^{(\ell)}}.
\end{equation}
The corresponding horizontal gradient and the horizontal divergence are defined by
$$
\nabla_{H}:=\left(X_{1}, \ldots, X_{N}\right)
$$
and
$$
\operatorname{div}_{H} v:=\nabla_{H} \cdot v,
$$
respectively. We will also use the following notations
$$
\left|x^{\prime}\right|=\sqrt{x_{1}^{\prime 2}+\cdots+x_{N}^{\prime 2}}
$$
and 
for the Euclidean norm on $\mathbb{R}^{N}$.

The explicit representation of the left invariant vector fields $X_{j}$ from \eqref{divergence} allows us to establish the identities

\begin{equation}
\left.\left.\left|\nabla_{H}\right| x^{\prime}\right|^{\gamma}|=\gamma| x^{\prime}\right|^{\gamma-1},
\end{equation}
and
\begin{equation}
    \label{cylin_ball}
    B'(0,R):=\{x\in \mathbb{G}:|x'|\leq R\}
\end{equation}
for $R>0$.

\subsection{Homogeneous Lie Groups}
 Now, we recall basic necessary concepts and fix the notations of a general homogeneous Lie group in a very briefly manner.

 A homogeneous Lie group is a connected simply connected Lie group $\mathfrak{g}$ whose Lie algebra is equipped with the following family of dilations:
 $$
D_\lambda=\operatorname{Exp}(A \ln \lambda)=\sum_{k=0}^{\infty} \frac{1}{k !}(\ln (\lambda) A)^k,
$$
where $A$ is a diagonalisable positive linear operator on $\mathfrak{g}$, and each $D_\lambda$ satisfies 
$$
\forall X, Y \in \mathfrak{g}, \;\lambda >0,\; [D_{\lambda}X,D_{\lambda}Y]=D_{\lambda}[X,Y],
$$
where $[X,Y]:=XY-YX$ is the Lie bracket. It induces the dilation structure on $\mathbb{G}$ which we denote by $D_{\lambda x}$ or just by $\lambda x$.

Let $d x$ be the Haar measure on $\mathbb{G}$ and let $|S|$ denote the volume of a measurable set $S \subset \mathbb{G}$. Then we have
$$
\left|D_\lambda(S)\right|=\lambda^Q|S| \text { and } \int_G f(\lambda x) d x=\lambda^{-Q} \int_G f(x) d x,
$$
where $Q:=\operatorname{Tr} A$ is the homogeneous dimension of $\mathbb{G}$.

In this setting we work with a homogeneous quasi-norm on $\mathbb{G}$, which is a continuous non-negative function $\mathbb{G} \ni x \mapsto|x| \in[0, \infty)
$ with the properties 
\begin{itemize}
\item $\left|x^{-1}\right|=|x|$ for all $x \in \mathbb{G}$, 
\item $|\lambda x|=\lambda|x|$ for all $x \in \mathbb{G}$ and $\lambda >0$,
\item $|x|=0$ if and only if $x=0$. 
\end{itemize} 
With respect to the homogeneous quasi-norm $|\cdot|$, we define the radial derivative operator by
\begin{equation}\label{radial_der_def}
\mathcal{R}_{|x|}f:=\frac{df(x)}{d|x|}
\end{equation}
and the quasi-ball centred at $x \in \mathbb{G}$ with radius $R > 0$ by 
$$
B(x, R) := \{y \in \mathbb{G} : |x^{-1}y| < R\}.
$$
The following polar decomposition on homogeneous Lie groups will be useful for our analysis: there is a (unique) positive Borel measure $\sigma$ on the unit sphere
$$
\mathfrak S:=\{x \in \mathbb{G}:|x|=1\}
$$
such that for all $f \in L^1(\mathbb{G})$ we have
$$
\int_{\mathbb{G}} f(x) d x=\int_0^{\infty} \int_{\mathfrak S} f(r y) r^{Q-1} d \sigma(y) d r.
$$

\section{Improved $L^{p}$-Hardy inequalities and identities}\label{section3}
In this section we discuss (weighted) improved $L^{p}$-Hardy inequalities and identities on $\mathbb{R}^{k}\times \mathbb{R}^{n-k}$, stratified and homogeneous Lie groups.

\subsection{Improved $L^{p}$-Hardy inequalities and identities on \texorpdfstring{$\mathbb{R}^{k}\times \mathbb{R}^{n-k}$}{Lg}}

\begin{theorem}\label{Th_Lp_Hardy_cyl}
Let $x=\left(x', x''\right) \in {\mathbb{R}^k} \times \mathbb{R}^{n-k}$. Let $1<p<\infty$, $\alpha \in \mathbb{R}$ and $|\cdot|_{k}$ be the Euclidean norm on $\mathbb{R}^{k}$.
\begin{enumerate}[label=(\roman*)]
\item For all complex-valued functions $f\in C_{0}^{\infty}(\mathbb{R}^{n} \backslash \{x'=0\})$, we have
\begin{equation}\label{Lp_inequality_cyl}
\left|{\frac{{k}-{\alpha}}{p}}\right|^{p}{{\left\|\frac{f}{|x'|_k^{\frac{{\alpha}}{p}}}\right\|^{p}_{L^{p}(\mathbb{R}^{n})} \leq \left\|\frac {x' \cdot \nabla_k}{|x'|_k^{\frac{{\alpha}}{p}}}f\right\|^{p}_{{L}^{p}(\mathbb{R}^{n})}}},
\end{equation}
where the constant $\left|{\frac{{k}-{\alpha}}{p}}\right|^{p}$ is sharp when $\alpha\neq{k}$.
\item
Moreover, we have the identity 
\begin{multline}\label{Lp_identity_cyl}
\left|{\frac{k-\alpha}{p}}\right|^{p}\left\|\frac{f}{|x'|_{k}^{\frac{\alpha}{p}}}\right\|_{L^{p}(\mathbb{R}^{n})}^p=\left\|\frac {x' \cdot \nabla_k}{|x'|_{k}^{\frac{{\alpha}}{p}}}f\right\|_{{L}^{p}(\mathbb{R}^{n})}^p \\-\int_{\mathbb{R}^{n}}C_{p}\left(\frac{x'\cdot\nabla_{k}}{|x'|_{k}^{\frac{\alpha}{p}}}f, \frac{x'\cdot\nabla_{k}}{|x'|_{k}^{\frac{\alpha}{p}}}f+\frac{k-\alpha}{p}\frac{f}{|x'|_{k}^{\frac{\alpha}{p}}}\right)dx
\end{multline}
for all complex-valued functions $f\in C_{0}^{\infty}(\mathbb{R}^{n} \backslash \{x'=0\})$, where the functional $C_{p}(\cdot,\cdot)$ is given by
\begin{equation}\label{Cp}
C_{p}(\xi,\eta):=|\xi|^{p}-|\xi-\eta|^{p}-p|\xi-\eta|^{p-2}\operatorname{Re}(\xi-\eta)\cdot \overline{\eta}\geq 0.
\end{equation}
\end{enumerate}
Furthermore, for all $p\geq2$, the functional $C_p$ vanishes if and only if $$f=|x|_{k}^{-\frac{k-\alpha}{p}}\varphi\left({\frac{x'}{|x'|_{k}}},x''\right)$$ 
for some function $\varphi:\mathbb{S}^{k-1}\times\mathbb{R}^{n-k}\rightarrow\mathbb{C}$, which makes the left-hand side of \eqref{Lp_identity_cyl} infinite unless $\varphi=f=0$ on the basis of the non-integrability of the function
$$\frac{|f(x)|^{p}}{|x'|_{k}^{\alpha}}=\frac{|\varphi(\frac{x'}{|x'|_{k}},x'')|^{p}}{|x'|_{k}^{k}}$$
on $\mathbb{R}^n$. Thus, the sharp constant $\left|{\frac{k-\alpha}{p}}\right|^{p}$ in \eqref{Lp_inequality_cyl} is not attained on $C^\infty_0(\mathbb{R}^n\backslash\{{x'=0}\}).$
\end{theorem}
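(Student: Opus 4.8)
The plan is to prove the identity \eqref{Lp_identity_cyl} first, and then read off the three remaining claims (nonnegativity of $C_p$, the sharpness of the constant, and the characterization of the vanishing set together with the non-attainability) as consequences. For the identity, I would pass to polar-type coordinates adapted to the cylindrical structure: write $x'=\rho\omega$ with $\rho=|x'|_k>0$ and $\omega\in\mathbb{S}^{k-1}$, keep $x''\in\mathbb{R}^{n-k}$ as a passive parameter, and note that the operator $\frac{x'\cdot\nabla_k}{|x'|_k}=\partial_\rho$ is exactly the radial derivative in the $x'$-variable. Setting $g:=|x'|_k^{-\alpha/p}f$ and using $\frac{x'\cdot\nabla_k}{|x'|_k^{\alpha/p}}f=\frac{x'\cdot\nabla_k}{|x'|_k}g + \frac{\alpha}{p}\frac{g}{\rho}\cdot(-1)$... actually more cleanly: introduce the substitution so that $\xi=\frac{x'\cdot\nabla_k}{|x'|_k^{\alpha/p}}f$ and $\eta=\xi+\frac{k-\alpha}{p}\frac{f}{|x'|_k^{\alpha/p}}$, so that $\xi-\eta=-\frac{k-\alpha}{p}\frac{f}{|x'|_k^{\alpha/p}}$. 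Then $C_p(\xi,\eta)=|\xi|^p-|\xi-\eta|^p-p|\xi-\eta|^{p-2}\operatorname{Re}(\xi-\eta)\cdot\overline{\eta}$, and rearranging \eqref{Lp_identity_cyl} is equivalent to the pointwise-after-integration statement
\[
\int_{\mathbb{R}^n}|\xi|^p\,dx=\left|\frac{k-\alpha}{p}\right|^p\left\|\frac{f}{|x'|_k^{\alpha/p}}\right\|_{L^p}^p+\int_{\mathbb{R}^n}\Big(|\xi|^p-C_p(\xi,\eta)\Big)dx,
\]
so what I must show is $\int_{\mathbb{R}^n}\big(|\xi-\eta|^p+p|\xi-\eta|^{p-2}\operatorname{Re}(\xi-\eta)\cdot\overline\eta\big)dx=\left|\frac{k-\alpha}{p}\right|^p\|f/|x'|_k^{\alpha/p}\|_{L^p}^p$. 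The first term gives exactly $\left|\frac{k-\alpha}{p}\right|^p\|f/|x'|_k^{\alpha/p}\|_{L^p}^p$ by definition of $\xi-\eta$, so the crux reduces to proving
\[
\int_{\mathbb{R}^n}|\xi-\eta|^{p-2}\operatorname{Re}(\xi-\eta)\cdot\overline{\eta}\;dx=0,
\]
i.e. $\int_{\mathbb{R}^n}\left|\frac{f}{|x'|_k^{\alpha/p}}\right|^{p-2}\operatorname{Re}\Big(\overline{f}\,|x'|_k^{-\alpha/p}\cdot\big(\tfrac{x'\cdot\nabla_k}{|x'|_k^{\alpha/p}}f+\tfrac{k-\alpha}{p}\tfrac{f}{|x'|_k^{\alpha/p}}\big)\Big)dx=0$ up to the sign $-\frac{k-\alpha}{p}$.

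The heart of the computation is therefore an integration-by-parts identity. I would establish that for $u=|x'|_k^{-\alpha/p}f$,
\[
\int_{\mathbb{R}^n}|u|^{p-2}\operatorname{Re}\big(\overline{u}\,(x'\cdot\nabla_k)u\big)\,dx=\tfrac1p\int_{\mathbb{R}^n}(x'\cdot\nabla_k)|u|^p\,dx=-\tfrac{k}{p}\int_{\mathbb{R}^n}|u|^p\,dx,
\]
using $\operatorname{div}_k(x')=k$ and the divergence theorem (legitimate since $f\in C_0^\infty(\mathbb{R}^n\setminus\{x'=0\})$, so there is no boundary contribution from $x'=0$ and compact support kills the rest). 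Combining this with the elementary identity $(x'\cdot\nabla_k)u=|x'|_k^{-\alpha/p}(x'\cdot\nabla_k)f-\frac{\alpha}{p}|x'|_k^{-\alpha/p}f$, hence $(x'\cdot\nabla_k)f\cdot|x'|_k^{-\alpha/p}=(x'\cdot\nabla_k)u+\frac{\alpha}{p}u$, one gets exactly the required vanishing: the term $\tfrac{k-\alpha}{p}u$ combines with $\tfrac{\alpha}{p}u$ to give $\tfrac{k}{p}u$, which cancels against the $-\tfrac{k}{p}$ from the by-parts formula. This proves \eqref{Lp_identity_cyl}. The nonnegativity $C_p\ge0$ is quoted from \cite[Step 3 of Proof of Lemma 3.4]{CKLL24}, and \eqref{Lp_inequality_cyl} is then immediate by dropping the (nonnegative) remainder integral.

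For the characterization and non-attainability, I would argue as follows. For $p\ge2$ one has the sharp quantitative lower bound $C_p(\xi,\eta)\ge c_p|\xi-\eta|^{p-2}|\eta|^2$ (or the even simpler fact, sufficient here, that $C_p(\xi,\eta)=0$ forces $\eta=0$ whenever $\xi-\eta\neq0$, together with the case analysis when $\xi-\eta=0$); this is again from the cited convexity lemma. Hence $C_p(\xi,\eta)\equiv0$ a.e. forces $\eta=0$ a.e., i.e. $\tfrac{x'\cdot\nabla_k}{|x'|_k}f=-\tfrac{k-\alpha}{p}\tfrac{f}{|x'|_k}\cdot|x'|_k^{?}$... precisely $\partial_\rho f=-\frac{k-\alpha}{p}\frac{f}{\rho}$ in the radial variable $\rho=|x'|_k$, whose general solution is $f(\rho\omega,x'')=\rho^{-(k-\alpha)/p}\varphi(\omega,x'')$ for an arbitrary function $\varphi\colon\mathbb{S}^{k-1}\times\mathbb{R}^{n-k}\to\mathbb{C}$, as claimed. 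Finally, for such $f$,
\[
\int_{\mathbb{R}^n}\frac{|f|^p}{|x'|_k^{\alpha}}\,dx=\int_{\mathbb{R}^{n-k}}\!\int_{\mathbb{S}^{k-1}}\!\int_0^\infty\frac{|\varphi(\omega,x'')|^p}{\rho^{k-\alpha}}\,\rho^{\alpha}\,\rho^{k-1}\,d\rho\,d\sigma(\omega)\,dx''=\int\!\int|\varphi|^p\Big(\int_0^\infty\frac{d\rho}{\rho}\Big)d\sigma\,dx'',
\]
and $\int_0^\infty\rho^{-1}d\rho=+\infty$ unless $\varphi\equiv0$. So a nonzero $f$ making the remainder vanish would force the left-hand side of \eqref{Lp_identity_cyl} to be infinite — impossible for $f\in C_0^\infty$ — whence $f\equiv0$; and any nonzero admissible $f$ makes the remainder strictly positive, so equality in \eqref{Lp_inequality_cyl} (equivalently, attainment of the sharp constant) cannot hold on $C_0^\infty(\mathbb{R}^n\setminus\{x'=0\})$. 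For the sharpness of the constant $\left|\frac{k-\alpha}{p}\right|^p$ itself (when $\alpha\neq k$), I would test the inequality on a family $f_\varepsilon(x)=|x'|_k^{-(k-\alpha)/p+\varepsilon}\psi(|x'|_k)\chi(x'')$ with a suitable cutoff $\psi$ and compact-support $\chi$, and let $\varepsilon\to0$ to see the ratio of the two sides tend to $\left|\frac{k-\alpha}{p}\right|^p$; this is a standard concentration argument, so I would only sketch it.

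The main obstacle I anticipate is bookkeeping in the integration-by-parts step — specifically making sure the complex-valued, $p\neq2$ computation $\int|u|^{p-2}\operatorname{Re}(\overline u\,(x'\cdot\nabla_k)u)=\frac1p\int(x'\cdot\nabla_k)|u|^p$ is carried out correctly (one uses $\partial_{x_j'}|u|^p=p|u|^{p-2}\operatorname{Re}(\overline u\,\partial_{x_j'}u)$, valid because $f\in C_0^\infty$ vanishes near $\{x'=0\}$ so $|u|^p$ is smooth where it is nonzero), and that no boundary terms survive — here the hypothesis $f\in C_0^\infty(\mathbb{R}^n\setminus\{x'=0\})$ is exactly what is needed. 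The rest is assembling known ingredients.
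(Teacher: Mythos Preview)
Your proposal is correct and follows essentially the same route as the paper: the core is the integration-by-parts identity (the paper writes it as $\operatorname{div}_k(x'/|x'|_k^{\alpha})=(k-\alpha)/|x'|_k^{\alpha}$ applied to $|f|^p$, you equivalently substitute $u=|x'|_k^{-\alpha/p}f$ and use $\operatorname{div}_k(x')=k$), after which the $C_p$-expansion, the citation of $C_p\ge0$ from \cite{CKLL24}, and the $\eta=0$ characterization proceed identically. The only noteworthy difference is that for sharpness the paper simply observes that $h_1=|x'|_k^{-(k-\alpha)/p}$ realizes equality in the H\"older step, whereas you outline the (more careful) cutoff/approximation argument; and the paper quotes the lower bound in the form $C_p(\xi,\eta)\ge c_p|\eta|^p$ rather than your $c_p|\xi-\eta|^{p-2}|\eta|^2$, but as you note either suffices to force $\eta=0$.
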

\begin{remark}
Note that \eqref{Lp_identity_cyl} is a sharp remainder formula for \eqref{Lp_inequality_cyl}.   
\end{remark}
\begin{remark} We refer to \cite{BEHL08} or \cite{OS09} for the special case $\alpha=0$, $k=n$ of the inequality \eqref{Lp_inequality_cyl}. Note that when $k=n$ we have $x'\cdot \nabla_{k}f/|x'|_{k}=df/d|x|$, thus implying corresponding improved Hardy inequalities for the radial derivative operator $df/d|x|$, e.g. the results from e.g. \cite{MOW13} and \cite{MOW19}. As for the cylindrical case, we refer to \cite[Corollary 1.1 and 1.2]{DP21} for $p=2$. 
    \end{remark}
\begin{proof}[Proof of Theorem \ref{Th_Lp_Hardy_cyl}]
By a direct calculation, we observe that
\begin{equation}\label{div_k}
\operatorname{div}_{k}\left(\frac{x^{\prime}}{\left|x^{\prime}\right|^ {\alpha}}\right)=\frac{\sum_{j=1}^{k}\left|x^{\prime}\right|^{\alpha} \partial_{j} x_{j}^{\prime}-\sum_{j=1}^{k} x_{j}^{\prime} \alpha\left|x^{\prime}\right|^{\alpha-1} \partial_{j}\left|x^{\prime}\right|}{\left|x^{\prime}\right|^{2 \alpha}}=\frac{k-\alpha}{\left|x^{\prime}\right|^{ \alpha}}
\end{equation}
for any $\alpha \in \mathbb{R}$ and $\left|x^{\prime}\right| \neq 0$, where $\operatorname{div}_{k}$ is the usual divergence on $\mathbb{R}^{k}$. By using the identity \eqref{div_k} and the divergence theorem one calculates
\begin{equation}\label{used_d_th}
\begin{split}
\int_{\mathbb{R}^{n}}\frac{|f(x)|^p}{|x'|_{k}^{\alpha}} dx &= \frac{1}{k-\alpha} \int_{\mathbb{R}^{n}}|f(x)|^p\operatorname{div}_{k}\left(\frac{x'}{|x'|_{k}^{\alpha}}\right) dx\\&= -\frac{p}{k-\alpha}\operatorname{Re}\int_{\mathbb{R}^{n}}f(x)|f(x)|^{p-2} \frac{\overline{x' \cdot \nabla_k f}}{|x'|_{k}^{\alpha}} dx.
\end{split}
\end{equation}
By the H\"older inequality, it follows that
\begin{equation*}
\begin{split}
\int_{\mathbb{R}^{n}}\frac{|f(x)|^p}{|x'|_{k}^{\alpha}} dx & \leq \left|\frac{p}{k-\alpha}\right|\int_{\mathbb{R}^{n}}\frac{|f(x)|^{p-1}}{|x'|_{k}^{\alpha}}{|x' \cdot \nabla_k f|}dx\\& \leq \left|\frac{p}{k-\alpha}\right|\int_{\mathbb{R}^{n}}\frac{|f(x)|^{p-1}}{|x'|_{k}^{\frac{\alpha(p-1)}{p}}}\frac{|x' \cdot \nabla_k f|}{|x'|_{k}^{\frac{\alpha}{p}}}dx \\& \leq \left|\frac{p}{k-\alpha}\right|\left(\int_{\mathbb{R}^{n}}\frac{|f(x)|^{p}}{|x'|_{k}^{\alpha}}dx\right)^{\frac{p-1}{p}}\left(\int_{\mathbb{R}^{n}}\frac{|x' \cdot \nabla_k f|^p}{|x'|_{k}^{\alpha}}dx\right)^{\frac{1}{p}}
\end{split}
\end{equation*}
which gives inequality \eqref{Lp_inequality_cyl}.

Observe that
$$
h_{1}(x)=|x'|_{k}^{-\frac{k-\alpha}{p}}, \;k\neq \alpha
$$
satisfies the following H\"older’s equality condition
$$
\left|\frac{p}{k-\alpha}\right|^{p}\frac{|x'\cdot \nabla_{k}h_{1}(x)|^{p}}{|x'|_{k}^{\alpha}}=\frac{|h_{1}(x)|^{p}}{|x'|_{k}^{\alpha}}
$$
implying the sharpness of the constant.

Let us show Part (ii).
Applying notations
$$
\xi:=\frac{x'\cdot \nabla_{k}}{|x'|_{k}^{\frac{\alpha}{p}}}f, \quad \eta:=\frac{x'\cdot \nabla_{k}}{|x'|_{k}^{\frac{\alpha}{p}}}f+\frac{k-\alpha}{p}\frac{f}{|x'|_{k}^{\frac{\alpha}{p}}}
$$
formula \eqref{Cp} can be rewritten as
\begin{equation}\label{Cp_1}
\begin{split}
C_{p}(\xi,\eta)=\left|\frac{x'\cdot \nabla_{k}}{|x'|_{k}^{\frac{\alpha}{p}}}f \right|^{p}-\left|-\frac{k-\alpha}{p}\frac{f}{|x'|_{k}^{\frac{\alpha}{p}}} \right|^{p}-p\left|-\frac{k-\alpha}{p}\frac{f}{|x'|_{k}^{\frac{\alpha}{p}}} \right|^{p-2}\operatorname{Re}\left(-\frac{k-\alpha}{p}\frac{f}{|x'|_{k}^{\frac{\alpha}{p}}}\right)\times \\ \overline{\left(\frac{x'\cdot \nabla_{k}f}{|x'|_{k}^{\frac{\alpha}{p}}}+\frac{k-\alpha}{p}\frac{f}{|x'|_{k}^{\frac{\alpha}{p}}}\right)}.
\end{split}
\end{equation}
Let us take integral from the both side of \eqref{Cp_1} and by using the identity \eqref{used_d_th} we have 
\begin{equation}\label{Cp_2}
\begin{split}
\int_{\mathbb{R}^{n}} C_{p}(\xi,\eta)dx=\int_{\mathbb{R}^{n}} \frac{|x'\cdot \nabla_{k}f|^{p}}{|x'|_{k}^{\alpha}}dx-\left|\frac{k-\alpha}{p}\right|^{p}\int_{\mathbb{R}^{n}}\frac{|f(x)|^{p}}{|x'|_{k}^{\alpha}}dx+p\left|\frac{k-\alpha}{p}\right|^{p}\int_{\mathbb{R}^{n}} \frac{|f(x)|^{p}}{|x'|_{k}^{\alpha}}dx
\\ -p\left|\frac{k-\alpha}{p}\right|^{p}\int_{\mathbb{R}^{n}} \frac{|f(x)|^{p}}{|x'|_{k}^{\alpha}}dx=\int_{\mathbb{R}^{n}} \frac{|x'\cdot \nabla_{k}f|^{p}}{|x'|_{k}^{\alpha}}dx-\left|\frac{k-\alpha}{p}\right|^{p}\int_{\mathbb{R}^{n}} \frac{|f(x)|^{p}}{|x'|_{k}^{\alpha}}dx,
\end{split}
\end{equation}
which gives \eqref{Lp_identity_cyl}.

Recalling from \cite[Step 3 of Proof of Lemma 3.4]{CKLL24} that
\begin{equation}\label{Cp_cylind_proof}
C_{p}(\xi,\eta)=|\xi|^{p}-|\xi-\eta|^{p}-p|\xi-\eta|^{p-2}\operatorname{Re}(\xi-\eta)\cdot \overline{\eta}\geq c_{p}|\eta|^{p}, \;\;2\leq p<\infty,
\end{equation}
for some $c_{p}\in(0,1]$, one can observe that $C_{p}(\xi,\eta)=0$ if and only if $\eta=0$. Therefore, the second term on the right-hand side of \eqref{Lp_identity_cyl} vanishes if and only if 
$$
\frac{x'\cdot \nabla_{k}}{|x'|_{k}^{\frac{\alpha}{p}}}f+\frac{k-\alpha}{p}\frac{f}{|x'|_{k}^{\frac{\alpha}{p}}}=0
$$
which is equivalent to
$$
\partial_{|x'|_{k}}\left(|x'|_{k}^{\frac{k-\alpha}{p}}f\right)=0.
$$
Then there exists a function $\varphi$ such that
$$
f(x)=|x|_{k}^{-\frac{k-\alpha}{p}}\varphi\left({\frac{x'}{|x'|_{k}}},x''\right).
$$
In this case 
$$
\frac{|f(x)|^{p}}{|x'|_{k}^{\alpha}}=\frac{|\varphi(\frac{x'}{|x'|_{k}},x'')|^{p}}{|x'|_{k}^{k}},
$$
whose integral
$$
\int_{\mathbb{R}^{n-k}}\int_{\mathbb{R}^{k}}\frac{|\varphi(\frac{x'}{|x'|_{k}},x'')|^{p}}{|x'|_{k}^{k}} dx'dx''
$$
is infinite. So, the left-hand side of \eqref{Lp_identity_cyl} is finite if and only if $\varphi=f=0$, then the sharp constant is not attained.  

The proof of Theorem \ref{Th_Lp_Hardy_cyl} is completed.
\end{proof}

\addcontentsline{toc}{section}{Unnumbered Section}
\subsection{Improved $L^p$-Hardy inequalities and identities on stratified Lie groups} Now, in this subsection we discuss the previous section's results in the setting of stratified Lie groups.
\begin{theorem}\label{Th_Lp_stratified}
Let $\mathbb{G}$ be a stratified Lie group with $\textit{N}$ being the dimension of the first stratum. We denote by $x'$ the variables from the first stratum of $\mathbb{G}$. Let $1<p<\infty$ and $\alpha \in \mathbb{R}$. Then for all complex-valued functions $f\in C_{0}^{\infty}(\mathbb{G}\backslash \{x'=0\})$ we have the following identity 
\begin{multline}\label{Lp_stratified}
\left|{\frac{N-\alpha}{p}}\right|^{p}\left\|\frac{f}{|x'|^{\frac{\alpha}{p}}}\right\|_{L^{p}(\mathbb{G})}^p=\left\|\frac {x' \cdot \nabla_H}{|x'|^{\frac{{\alpha}}{p}}}f\right\|_{{L}^{p}(\mathbb{G})}^p\\-\int_{\mathbb{G}}C_{p}\left(\frac{x'\cdot\nabla_{H}}{|x'|^{\frac{\alpha}{p}}}f, \frac{x'\cdot\nabla_{H}}{|x'|^{\frac{\alpha}{p}}}f+\frac{N-\alpha}{p}\frac{f}{|x'|^{\frac{\alpha}{p}}}\right)dx,
\end{multline}
where the functional $C_{p}(\cdot,\cdot)$ is given by
\begin{equation}\label{Cp'}
C_{p}(\xi,\eta):=|\xi|^{p}-|\xi-\eta|^{p}-p|\xi-\eta|^{p-2}\operatorname{Re}(\xi-\eta)\cdot \overline{\eta}\geq 0.
\end{equation}
\begin{remark}
    If we drop the last term on the right-hand side of \eqref{Lp_stratified}, we obtain the result from \cite{RSY17_strat}.
\end{remark}
\end{theorem}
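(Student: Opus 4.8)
The plan is to follow verbatim the structure of the proof of Theorem \ref{Th_Lp_Hardy_cyl}(ii), with the Euclidean gradient $\nabla_k$ and divergence $\operatorname{div}_k$ on $\mathbb{R}^k$ replaced by the horizontal gradient $\nabla_H$ and horizontal divergence $\operatorname{div}_H$ on $\mathbb{G}$, and with $k$ (the dimension of the cylinder factor) replaced by $N$ (the dimension of the first stratum). The crucial point that makes this substitution work is the explicit triangular form \eqref{divergence} of the generators $X_1,\dots,X_N$: each $X_m$ equals $\partial/\partial x_m'$ plus a sum of terms of the form $a(\,\cdot\,)\,\partial/\partial x_j^{(\ell)}$ with $\ell\ge 2$, so these correction terms annihilate any function depending only on the first-stratum variable $x'$. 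Consequently, applied to $x'$-dependent functions such as $|x'|^{\gamma}$ or $x'/|x'|^{\alpha}$, the operators $\nabla_H$ and $\operatorname{div}_H$ coincide with the ordinary Euclidean gradient and divergence on $\mathbb{R}^N$.

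First I would record the divergence identity $\operatorname{div}_H(x'/|x'|^{\alpha}) = (N-\alpha)/|x'|^{\alpha}$ for $|x'|\ne 0$: by the observation above this reduces to the Euclidean computation \eqref{div_k} with $k$ replaced by $N$. Next, using the divergence theorem for the left-invariant fields $X_m$ (valid on $\mathbb{G}$ since the Haar measure is the Lebesgue measure and the $X_m$ are divergence-free, as recalled after \eqref{divergence}) together with the chain rule $\nabla_H(|f|^p)=p|f|^{p-2}\operatorname{Re}(\overline f\,\nabla_H f)$, I would integrate by parts to obtain the analogue of \eqref{used_d_th}:
$$\int_{\mathbb{G}}\frac{|f|^p}{|x'|^{\alpha}}\,dx = \frac{1}{N-\alpha}\int_{\mathbb{G}}|f|^p\operatorname{div}_H\!\left(\frac{x'}{|x'|^{\alpha}}\right)dx = -\frac{p}{N-\alpha}\operatorname{Re}\int_{\mathbb{G}}f|f|^{p-2}\frac{\overline{x'\cdot\nabla_H f}}{|x'|^{\alpha}}\,dx.$$
The hypothesis $f\in C_0^\infty(\mathbb{G}\setminus\{x'=0\})$ guarantees that there are no boundary contributions and that the singular weight $|x'|^{-\alpha}$ causes no trouble in this step.

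Finally, with the notations $\xi := (x'\cdot\nabla_H f)/|x'|^{\alpha/p}$ and $\eta := \xi + \tfrac{N-\alpha}{p}\,f/|x'|^{\alpha/p}$, so that $\xi-\eta = -\tfrac{N-\alpha}{p}\,f/|x'|^{\alpha/p}$, I would expand $C_p(\xi,\eta)$ as in \eqref{Cp_1}, integrate over $\mathbb{G}$, and substitute the identity just obtained. Exactly as in \eqref{Cp_2}, the contributions coming from $\int_{\mathbb{G}}p|\xi-\eta|^{p-2}\operatorname{Re}\big((\xi-\eta)\overline\eta\big)\,dx$ telescope and cancel, leaving
$$\int_{\mathbb{G}}C_p(\xi,\eta)\,dx = \left\|\frac{x'\cdot\nabla_H}{|x'|^{\alpha/p}}f\right\|_{L^p(\mathbb{G})}^p - \left|\frac{N-\alpha}{p}\right|^p\left\|\frac{f}{|x'|^{\alpha/p}}\right\|_{L^p(\mathbb{G})}^p,$$
which is \eqref{Lp_stratified} after rearrangement; the nonnegativity $C_p\ge 0$ in \eqref{Cp'} is the pointwise inequality from \cite[Step 3 of Proof of Lemma 3.4]{CKLL24}. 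I expect the only genuinely non-routine point to be the very first step — carefully justifying, from the triangular representation \eqref{divergence}, that $\nabla_H$ and $\operatorname{div}_H$ act on $x'$-dependent functions as their Euclidean counterparts on $\mathbb{R}^N$, and that the divergence theorem is applicable in this weighted complex-valued setting; once that is in place, the remaining algebra is identical to the Euclidean case.
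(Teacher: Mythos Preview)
Your proposal is correct and follows essentially the same route as the paper's proof: compute $\operatorname{div}_H(x'/|x'|^{\alpha})=(N-\alpha)/|x'|^{\alpha}$, integrate by parts via the divergence theorem to obtain the analogue of \eqref{used_d_th}, then plug the chosen $\xi,\eta$ into the definition of $C_p$ and use that identity to collapse the cross terms. Your added remarks on the triangular form \eqref{divergence} and on why the divergence theorem is applicable are helpful clarifications but do not alter the argument.
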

\begin{proof}[Proof of Theorem \ref{Th_Lp_stratified}]
By a direct calculation, we observe that
\begin{equation}\label{div_H}
\operatorname{div}_{H}\left(\frac{x^{\prime}}{\left|x^{\prime}\right|^ {\alpha}}\right)=\frac{\sum_{j=1}^{N}\left|x^{\prime}\right|^{\alpha} X_{j} x_{j}^{\prime}-\sum_{j=1}^{N} x_{j}^{\prime} \alpha\left|x^{\prime}\right|^{\alpha-1}X_{j}\left|x^{\prime}\right|}{\left|x^{\prime}\right|^{2 \alpha}}=\frac{N-\alpha}{\left|x^{\prime}\right|^{ \alpha}}
\end{equation}
for any $\alpha \in \mathbb{R}$ and $\left|x^{\prime}\right| \neq 0$.

By using the identity \eqref{div_k} and the divergence theorem one calculates
\begin{equation}\label{used_d_th_2}
\begin{split}
\int_{\mathbb{G}}\frac{|f(x)|^p}{|x'|^{\alpha}} dx &= \frac{1}{N-\alpha} \int_{\mathbb{G}}|f(x)|^p\operatorname{div_H}\left(\frac{x'}{|x'|^{\alpha}}\right) dx\\&= -\frac{p}{N-\alpha}\operatorname{Re}\int_{\mathbb{G}}f(x)|f(x)|^{p-2} \frac{\overline{x' \cdot \nabla_H f}}{|x'|^{\alpha}} dx.
\end{split}
\end{equation}
Applying notations
$$
\xi:=\frac{x'\cdot \nabla_{H}}{|x'|^{\frac{\alpha}{p}}}f, \quad \eta:=\frac{x'\cdot \nabla_{H}}{|x'|^{\frac{\alpha}{p}}}f+\frac{N-\alpha}{p}\frac{f}{|x'|^{\frac{\alpha}{p}}}
$$
formula \eqref{Cp'} can be rewritten as
\begin{equation}\label{Cp_1'}
\begin{split}
C_{p}(\xi,\eta)=\left|\frac{x'\cdot \nabla_{H}}{|x'|^{\frac{\alpha}{p}}}f \right|^{p}-\left|-\frac{N-\alpha}{p}\frac{f}{|x'|^{\frac{\alpha}{p}}} \right|^{p}-p\left|-\frac{N-\alpha}{p}\frac{f}{|x'|^{\frac{\alpha}{p}}} \right|^{p-2}\operatorname{Re}\left(-\frac{N-\alpha}{p}\frac{f}{|x'|^{\frac{\alpha}{p}}}\right)\times \\ \overline{\left(\frac{x'\cdot \nabla_{H}f}{|x'|^{\frac{\alpha}{p}}}+\frac{N-\alpha}{p}\frac{f}{|x'|^{\frac{\alpha}{p}}}\right)}.
\end{split}
\end{equation}
Let us take integral from the both side of \eqref{Cp_1'} and by using the identity \eqref{used_d_th_2} we have 
\begin{equation}\label{Cp_2_st}
\begin{split}
\int_{{\mathbb{G}}} C_{p}(\xi,\eta)dx=\int_{{\mathbb{G}}} \frac{|x'\cdot \nabla_{H}f|^{p}}{|x'|^{\alpha}}dx-\left|\frac{N-\alpha}{p}\right|^{p}\int_{{\mathbb{G}}}\frac{|f(x)|^{p}}{|x'|^{\alpha}}dx+p\left|\frac{N-\alpha}{p}\right|^{p}\int_{{\mathbb{G}}} \frac{|f(x)|^{p}}{|x'|^{\alpha}}dx
\\ -p\left|\frac{N-\alpha}{p}\right|^{p}\int_{\mathbb{G}} \frac{|f(x)|^{p}}{|x'|^{\alpha}}dx=\int_{{\mathbb{G}}} \frac{|x'\cdot \nabla_{H}f|^{p}}{|x'|^{\alpha}}dx-\left|\frac{N-\alpha}{p}\right|^{p}\int_{{\mathbb{G}}} \frac{|f(x)|^{p}}{|x'|^{\alpha}}dx,
\end{split}
\end{equation}
which gives \eqref{Lp_stratified}.
\end{proof}

\subsection{Improved $L^p$-Hardy inequalities and identities on homogeneous Lie groups}
Here, we now discuss the above results on a general homogeneous Lie group $\mathbb{G}$ for the radial derivative operator $\mathcal{R}_{|\cdot|}$ with any homogeneous quasi-norm $|\cdot|$ on $\mathbb{G}$. 
\begin{theorem}\label{Th_Lp_hom}
Let $\mathbb{G}$ be a homogeneous Lie group of homogeneous dimension  $\textit{Q}$. Let $1<p<\infty$ and $\alpha \in \mathbb{R}$. Then for all complex-valued functions $f\in C_{0}^{\infty}(\mathbb{G} \backslash \{0\})$, we have
\begin{multline}\label{Lp_hom_id}
\left|{\frac{Q-\alpha}{p}}\right|^{p}\left\|\frac{f}{|x|^{\frac{\alpha}{p}}}\right\|_{L^{p}(\mathbb{G})}^p=\left\|\frac {\mathcal{R}_{|x|}f}{|x|^{\frac{{\alpha}}{p}-1}}\right\|_{{L}^{p}(\mathbb{G})}^p-\int_{\mathbb{G}}C_{p}\left(\frac{\mathcal{R}_{|x|}f}{|x|^{\frac{\alpha}{p}-1}}, \frac{\mathcal{R}_{|x|}f}{|x|^{\frac{\alpha}{p}-1}}+\frac{Q-\alpha}{p}\frac{f}{|x|^{\frac{\alpha}{p}}}\right)dx,
\end{multline}
where the functional $C_{p}(\cdot,\cdot)$ is given by
\begin{equation}\label{Cp''}
C_{p}(\xi,\eta)=|\xi|^{p}-|\xi-\eta|^{p}-p|\xi-\eta|^{p-2}\operatorname{Re}(\xi-\eta)\cdot\overline{\eta}\geq 0.
\end{equation}
Moreover, for all $p\geq2$, the functional $C_p$ vanishes if and only if $$f(x)=|x|^{-\frac{Q-\alpha}{p}}\varphi\left({\frac{x}{|x|}}\right)$$ 
for some function $\varphi:\mathfrak{S}\rightarrow\mathbb{C}$, which makes the left-hand side of \eqref{Lp_hom_id} infinite unless $\varphi=f=0$ on the basis of the non-integrability of the function
$$\frac{|f(x)|^{p}}{|x|^{\alpha}}=\frac{|\varphi(\frac{x}{|x|})|^{p}}{|x|^{Q}}$$
on $\mathbb{G}$. Consequently, the sharp constant $\left|{\frac{Q-\alpha}{p}}\right|^{p}$ in
\begin{equation}\label{Lp_hom_id_ineq}
\left|{\frac{Q-\alpha}{p}}\right|^{p}\left\|\frac{f}{|x|^{\frac{\alpha}{p}}}\right\|_{L^{p}(\mathbb{G})}^p\leq \left\|\frac {\mathcal{R}_{|x|}f}{|x|^{\frac{{\alpha}}{p}-1}}\right\|_{{L}^{p}(\mathbb{G})}^p
\end{equation}
is not attained on $C^\infty_0(\mathbb{G}\backslash\{0\}).$
\end{theorem}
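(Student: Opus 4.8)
The plan is to follow the same route as in the proofs of Theorem~\ref{Th_Lp_Hardy_cyl} and Theorem~\ref{Th_Lp_stratified}, replacing the use of the divergence theorem by integration by parts in the radial variable through the polar decomposition on $\mathbb{G}$ recalled in Section~\ref{Preliminaries}. One may assume $Q\neq\alpha$: if $Q=\alpha$ then $\eta=\xi$, $\xi-\eta=0$, $C_p(\xi,\xi)=|\xi|^p$, and \eqref{Lp_hom_id} reduces after cancellation to the tautology $0=0$, so the identity is trivial in that case.

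The first step is to establish the homogeneous analogue of \eqref{used_d_th}, namely that
\[
\int_{\mathbb{G}}\frac{|f(x)|^p}{|x|^{\alpha}}\,dx=-\frac{p}{Q-\alpha}\operatorname{Re}\int_{\mathbb{G}}f(x)|f(x)|^{p-2}\,\frac{\overline{\mathcal{R}_{|x|}f}}{|x|^{\alpha-1}}\,dx .
\]
I would derive this by writing the left-hand side in polar coordinates as $\int_0^\infty\!\int_{\mathfrak{S}}|f(ry)|^p\,r^{Q-1-\alpha}\,d\sigma(y)\,dr$, using $r^{Q-1-\alpha}=\frac{1}{Q-\alpha}\frac{d}{dr}r^{Q-\alpha}$, integrating by parts in $r$, and invoking that for $f\in C_0^\infty(\mathbb{G}\setminus\{0\})$ the function $r\mapsto f(ry)$ is smooth with compact support in $(0,\infty)$ for each fixed $y\in\mathfrak{S}$ and satisfies $\partial_r f(ry)=(\mathcal{R}_{|x|}f)(ry)$, so that the boundary terms vanish and $\frac{d}{dr}|f(ry)|^p=p|f(ry)|^{p-2}\operatorname{Re}\!\big(\overline{f(ry)}\,(\mathcal{R}_{|x|}f)(ry)\big)$; undoing the polar decomposition with $r^{Q-\alpha}=r^{Q-1}\cdot r^{1-\alpha}$ then gives the displayed identity.

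The second step is purely algebraic. Setting $\xi:=\mathcal{R}_{|x|}f/|x|^{\frac{\alpha}{p}-1}$ and $\eta:=\xi+\frac{Q-\alpha}{p}\,f/|x|^{\frac{\alpha}{p}}$, so that $\xi-\eta=-\frac{Q-\alpha}{p}\,f/|x|^{\frac{\alpha}{p}}$, I would expand $C_p(\xi,\eta)$ from \eqref{Cp''} exactly as in \eqref{Cp_1}, integrate over $\mathbb{G}$, substitute the identity of the first step into the cross term, and collect the resulting multiples of $\int_{\mathbb{G}}|f|^p|x|^{-\alpha}\,dx$ precisely as in \eqref{Cp_2}; this yields $\int_{\mathbb{G}}C_p(\xi,\eta)\,dx=\|\mathcal{R}_{|x|}f/|x|^{\frac{\alpha}{p}-1}\|_{L^p(\mathbb{G})}^p-|\frac{Q-\alpha}{p}|^p\|f/|x|^{\frac{\alpha}{p}}\|_{L^p(\mathbb{G})}^p$, which is \eqref{Lp_hom_id}, and dropping the nonnegative remainder term gives \eqref{Lp_hom_id_ineq}. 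For the rigidity part I would invoke the lower bound $C_p(\xi,\eta)\ge c_p|\eta|^p$ for $p\ge2$ from \cite[Step 3 of Proof of Lemma 3.4]{CKLL24}, already used in the proof of Theorem~\ref{Th_Lp_Hardy_cyl}: thus $C_p$ vanishes if and only if $\eta=0$, i.e.\ $\mathcal{R}_{|x|}f+\frac{Q-\alpha}{p}\frac{f}{|x|}=0$, i.e.\ $\mathcal{R}_{|x|}\!\big(|x|^{\frac{Q-\alpha}{p}}f\big)=0$; in polar coordinates this forces $r^{\frac{Q-\alpha}{p}}f(ry)=\varphi(y)$ to be independent of $r$, hence $f(x)=|x|^{-\frac{Q-\alpha}{p}}\varphi(x/|x|)$, whereupon $|f(x)|^p|x|^{-\alpha}=|\varphi(x/|x|)|^p|x|^{-Q}$ and, by the polar decomposition, $\int_{\mathbb{G}}|\varphi(x/|x|)|^p|x|^{-Q}\,dx=\big(\int_{\mathfrak{S}}|\varphi|^p\,d\sigma\big)\int_0^\infty r^{-1}\,dr=\infty$ unless $\varphi\equiv0$, so the left-hand side of \eqref{Lp_hom_id} is finite only for $f\equiv0$ and the sharp constant in \eqref{Lp_hom_id_ineq} is not attained on $C_0^\infty(\mathbb{G}\setminus\{0\})$.

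The only point needing genuine care — and what I would regard as the main obstacle — is the radial integration by parts behind the first step: one must be sure that the radial derivative operator $\mathcal{R}_{|x|}=d/d|x|$ really acts as differentiation along the dilation orbits, so that $\partial_r f(ry)=(\mathcal{R}_{|x|}f)(ry)$ holds, and that all integrands in sight are measurable and integrable so that Fubini and the polar decomposition apply with no boundary contributions. Since $f\in C_0^\infty(\mathbb{G}\setminus\{0\})$ has compact support avoiding the origin, along each ray the functions of $r$ involved are smooth and compactly supported in $(0,\infty)$, which settles this; everything else is a verbatim repetition of the computations already carried out on $\mathbb{R}^k\times\mathbb{R}^{n-k}$ and on stratified groups.
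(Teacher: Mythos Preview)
Your proposal is correct and follows essentially the same route as the paper's proof: polar decomposition on $\mathbb{G}$ and radial integration by parts to obtain the analogue of \eqref{used_d_th}, then the same algebraic expansion of $C_p(\xi,\eta)$ and the same rigidity argument via the lower bound $C_p(\xi,\eta)\ge c_p|\eta|^p$ from \cite{CKLL24}. Your treatment is slightly more explicit in places (the trivial case $Q=\alpha$, the divergence of $\int_0^\infty r^{-1}\,dr$), but the structure and all key steps coincide with the paper's argument.
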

\begin{remark} The sharpness of the constant $\left|{\frac{Q-\alpha}{p}}\right|^{p}$ in \eqref{Lp_hom_id_ineq} was shown in \cite[Theorem 3.4]{RSY_2018} or \cite[Theorem 3.2]{LRY_17} when $\alpha\neq Q$. Moreover, the inequality \eqref{Lp_hom_id_ineq} when $\alpha=0$ and the identity \eqref{Lp_hom_id} when $p=2$ were obtained in \cite[Theorem 3.1 and Theorem 4.1]{RS17_Adv}. We also refer to \cite[Theorem 3.1 and Theorem 4.1]{RSY18} for the inequality \eqref{Lp_hom_id_ineq} with remainder terms.
\end{remark}
\begin{proof}[Proof of Theorem \ref{Th_Lp_hom}] 
By introducing the coordinates $(r,y)=\left(|x|,\frac{x}{|x|}\right)\in (0,\infty)\times \mathfrak S$, we write the integral on the left-hand side of \eqref{Lp_hom_id} as
\begin{equation}\label{Lp_coor}
\begin{split}
\int_{\mathbb{G}}\frac{|f(x)|^p}{|x|^{\alpha}}dx &=\int_{0}^{\infty}\int_{\mathfrak S}r^{Q-1-\alpha}|f(r y)|^{p} d \sigma (y)dr \\& = -\frac{p}{Q-\alpha}\int_{0}^{\infty}r^{Q-\alpha}\operatorname{Re}\int_{\mathfrak S}|f(r y)|^{p-2}f(r y) \overline{\frac{df(ry)}{dr}} d \sigma (y)dr \\&=-\frac{p}{Q-\alpha} \operatorname{Re}\int_{\mathbb{G}}\frac{f(x)|f(x)|^{p-2}}{|x|^{\alpha-1}} \overline{\mathcal{R}_{|x|}f}dx.
\end{split}
\end{equation}
Applying notations 
$$
\xi:=\frac{\mathcal{R}_{|x|}f}{|x|^{\frac{\alpha}{p}-1}},
 \quad 
\eta:=\frac{\mathcal{R}_{|x|}f}{|x|^{\frac{\alpha}{p}-1}}+\frac{Q-\alpha}{p}\frac{f}{|x|^{\frac{\alpha}{p}}}
$$
formula \eqref{Cp''} can be rewritten as
\begin{equation}\label{Cp_homo}
\begin{split}
C_{p}(\xi,\eta)=\left|\frac{\mathcal{R}_{|x|}f}{|x|^{\frac{\alpha}{p}-1}} \right|^{p}-\left|-\frac{Q-\alpha}{p}\frac{\mathcal{R}_{|x|}f}{|x|^{\frac{\alpha}{p}}}\right|^{p}-p\left|-\frac{Q-\alpha}{p}\frac{\mathcal{R}_{|x|}f}{|x|^{\frac{\alpha}{p}}}\right|^{p-2}\operatorname{Re}\left(-\frac{Q-\alpha}{p}\frac{\mathcal{R}_{|x|}f}{|x|^{\frac{\alpha}{p}}}\right)\times \\ \overline{\left(\frac{\mathcal{R}_{|x|}f}{|x|^{\frac{\alpha}{p}-1}}+\frac{Q-\alpha}{p}\frac{\mathcal{R}_{|x|}f}{|x|^{\frac{\alpha}{p}}}\right)}.
\end{split}
\end{equation}
By using the identity \eqref{Lp_coor} we take integral from the both side of \eqref{Cp_homo} to get 
\begin{multline*}
\left|{\frac{Q-\alpha}{p}}\right|^{p}\left\|\frac{f}{|x|^{\frac{\alpha}{p}}}\right\|_{L^{p}(\mathbb{G})}^p=\left\|\frac {\mathcal{R}_{|x|}f}{|x|^{\frac{{\alpha}}{p}-1}}\right\|_{{L}^{p}(\mathbb{G})}^p-\int_{\mathbb{G}}C_{p}\left(\frac{\mathcal{R}_{|x|}f}{|x|^{\frac{\alpha}{p}-1}}, \frac{\mathcal{R}_{|x|}f}{|x|^{\frac{\alpha}{p}-1}}+\frac{Q-\alpha}{p}\frac{f}{|x|^{\frac{\alpha}{p}}}\right)dx,
\end{multline*}
which is the identity \eqref{Lp_hom_id}.

Recall from \eqref{Cp_cylind_proof} that $C_{p}(\xi,\eta)=0$ if and only if $\eta=0$. Therefore, the second term on the right-hand side of \eqref{Lp_hom_id} vanishes if and only if
$$
\frac{f}{|x|^{\frac{\alpha}{p}}}+\frac{p}{Q-\alpha}\cdot\frac{\mathcal{R}_{|x|}f}{|x|^{\frac{\alpha}{p}-1}}=0,
$$
which is equivalent to
$$
\mathcal{R}_{|x|}\left(|x|^{\frac{Q-\alpha}{p}f}\right)=0.
$$
Then there exists a function $\varphi:\mathfrak{S}\rightarrow\mathbb{C}$ such that
$$
f(x)=|x|^{-\frac{Q-\alpha}{p}}\varphi\left({\frac{x}{|x|}}\right).
$$
Then we have
$$
\frac{|f(x)|^{p}}{|x|^{\alpha}}=\frac{|\varphi(\frac{x}{|x|})|^{p}}{|x|^{Q}},
$$
hence one can observe that the left-hand side of \eqref{Lp_hom_id} is finite if and only if $\varphi=f=0$, i.e. the constant is not attained.  
\end{proof}
\section{$L^p$-Hardy type inequalities and identities with logarithmic weights}\label{section4}
In this section we discuss (weighted) $L^{p}$-Hardy type inequalities and identities involving logarithmic weights on $\mathbb{R}^{k}\times \mathbb{R}^{n-k}$, stratified and homogeneous Lie groups. 

\subsection{$L^p$-Hardy type inequalities and identities with logarithmic weights on \texorpdfstring{$\mathbb{R}^{k}\times \mathbb{R}^{n-k}$}{Lg}}
\begin{theorem}\label{th_ball} Let $x=\left(x', x''\right) \in {\mathbb{R}^k} \times \mathbb{R}^{n-k}$. Let $|\cdot|_{k}$ be the Euclidean norm on $\mathbb{R}^{k}$. Let $1<p<\infty$ and $\alpha,\beta \in \mathbb{R}$.  
\begin{enumerate}[label=(\roman*)]
    \item Let $(k-\alpha-p)(\beta+p)\geq 0$. Then for all complex-valued functions $f\in C_0^{\infty}(\{0<|x'|_{k}<R\})$ we have
     \begin{equation}\label{ball_id_ineq}
     \left|\frac{\beta+p}{p}\right|^{p}\quad   \left\|\frac{f}{|x'|_{k}^{\frac{\alpha+p}{p}}\left(log\frac{R}{|x'|_{k}}\right)^{\frac{\beta+p+1}{p}}}\right\|_{L^{p}(0<|x'|_{k}<R)}^p\leq \left\|\frac {x' \cdot \nabla_{k}}{|x'|_{k}^{\frac{\alpha+p}{p}}\left(log\frac{R}{|x'|_{k}}\right)^{\frac{\beta+1}{p}}}f\right\|_{{L}^{p}(0<|x'|_{k}<R)}^p, 
     \end{equation}
     where the constant $\left|\frac{\beta+p}{p}\right|^{p}$ is sharp when $\beta\neq - p$.
     \item Moreover, 
for all complex-valued functions $f\in C_0^{\infty}(\{0<|x'|_{k}<R\})$ we have the identity
 \begin{multline}\label{ball_id}
 \left|\frac{\beta+p}{p}\right|^{p}\quad   \left\|\frac{f}{|x'|_{k}^{\frac{\alpha+p}{p}}\left(log\frac{R}{|x'|_{k}}\right)^{\frac{\beta+p+1}{p}}}\right\|_{L^{p}(0<|x'|_{k}<R)}^p=\left\|\frac {x' \cdot \nabla_{k}}{|x'|_{k}^{\frac{\alpha+p}{p}}\left(log\frac{R}{|x'|_{k}}\right)^{\frac{\beta+1}{p}}}f\right\|_{{L}^{p}(0<|x'|_{k}<R)}^p  \\-(k-\alpha-p)\left(\frac{\beta+p}{p}\right)\left|\frac{\beta+p}{p}\right|^{p-2}\int_{0<|x'|_{k}<R}\frac{|f|^p}{|x'|_{k}^{\alpha+p}\left(log\frac{R}{|x'|_{k}}\right)^{\beta+p}}dx
\\-\int_{0<|x'|_{k}<R}C_p\left(\xi,\eta\right)dx,  
\end{multline}
where  \begin{equation}\label{xi_ball}
\xi:=\frac{x'\cdot\nabla_{k}}{|x'|_{k}^{\frac{\alpha+p}{p}}\left(log\frac{R}{|x'|_{k}}\right)^{\frac{\beta+1}{p}}}f
\end{equation}
and
\begin{equation}\label{eta_ball}
\eta:=\frac{x'\cdot\nabla_{k}}{|x'|_{k}^{\frac{\alpha+p}{p}}\left(log\frac{R}{|x'|_{k}}\right)^{\frac{\beta+1}{p}}}f+\frac{\beta+p}{p}\frac{f}{|x'|_{k}^{\frac{\alpha+p}{p}}\left(log\frac{R}{|x'|_{k}}\right)^{\frac{\beta+p+1}{p}}},
\end{equation} and the functional $C_{p}(\cdot,\cdot)$ is given by
\begin{equation}\label{Cp_ball}
C_{p}(\xi,\eta)=|\xi|^{p}-|\xi-\eta|^{p}-p|\xi-\eta|^{p-2}\operatorname{Re}(\xi-\eta)\cdot\overline{\eta}\geq 0.
\end{equation}
\end{enumerate}
Furthermore, when $k=\alpha+p$ for all $p\geq2$, the functional $C_p$ vanishes if and only if $$f=\left(log\frac{R}{|x'|_{k}}\right)^{\frac{\beta+p}{p}}\varphi\left({\frac{x'}{|x'|_{k}}},x''\right)$$ 
for some function $\varphi:\mathbb{S}^{k-1}\times\mathbb{R}^{n-k}\rightarrow\mathbb{C}$, which makes the left-hand side of \eqref{ball_id_ineq} infinite unless $\varphi=f=0$ on the basis of the non-integrability of the function
$$\frac{|f(x)|^{p}}{|x'|_{k}^{\alpha+p}\left(log\frac{R}{|x'|_{k}}\right)^{\beta+p+1}}=\frac{|\varphi(\frac{x'}{|x'|_{k}},x'')|^{p}}{|x'|_{k}^{k}log\frac{R}{|x'|_{k}}}$$
on $\mathbb{R}^n$. Consequently, the sharp constant $\left|\frac{\beta+p}{p}\right|^{p}$ in \eqref{ball_id_ineq} is not attained on $C_0^{\infty}(\{0<|x'|_{k}<R\})$.
\end{theorem}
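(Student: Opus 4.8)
The plan is to follow the proof of Theorem~\ref{Th_Lp_Hardy_cyl} line for line in structure, replacing the one-term divergence identity \eqref{div_k} by its logarithmically weighted analogue. The only genuinely new computation is that, for $0<|x'|_k<R$ and arbitrary $\alpha,\beta\in\mathbb{R}$,
\[
\operatorname{div}_k\!\left(\frac{x'}{|x'|_k^{\alpha+p}\left(\log\frac{R}{|x'|_k}\right)^{\beta+p}}\right)=\frac{k-\alpha-p}{|x'|_k^{\alpha+p}\left(\log\frac{R}{|x'|_k}\right)^{\beta+p}}+\frac{\beta+p}{|x'|_k^{\alpha+p}\left(\log\frac{R}{|x'|_k}\right)^{\beta+p+1}},
\]
which is immediate from $x'\cdot\nabla_k g(|x'|_k)=|x'|_k\,g'(|x'|_k)$ together with $\tfrac{d}{dr}\log\tfrac{R}{r}=-\tfrac1r$. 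The two terms on the right are exactly what produces, respectively, the middle summand and the leading term of the identity \eqref{ball_id}.

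Multiplying this identity by $|f|^p$, integrating over $\{0<|x'|_k<R\}$ and applying the divergence theorem in the $x'$-variables — with no boundary contribution, since $f\in C_0^\infty(\{0<|x'|_k<R\})$ is supported away from both $|x'|_k=0$ and $|x'|_k=R$ — one obtains, writing
\[
A:=\int_{0<|x'|_k<R}\frac{|f|^p}{|x'|_k^{\alpha+p}\left(\log\frac{R}{|x'|_k}\right)^{\beta+p}}dx,\qquad B:=\int_{0<|x'|_k<R}\frac{|f|^p}{|x'|_k^{\alpha+p}\left(\log\frac{R}{|x'|_k}\right)^{\beta+p+1}}dx,
\]
the basic relation
\[
(k-\alpha-p)A+(\beta+p)B=-p\,\operatorname{Re}\int_{0<|x'|_k<R}\frac{f|f|^{p-2}\,\overline{x'\cdot\nabla_k f}}{|x'|_k^{\alpha+p}\left(\log\frac{R}{|x'|_k}\right)^{\beta+p}}dx,
\]
which is the counterpart of \eqref{used_d_th}. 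For part~(i): under the hypothesis $(k-\alpha-p)(\beta+p)\ge0$ the quantity $(k-\alpha-p)A$ has the same sign as $(\beta+p)B$, so it may be discarded from the left; factoring the weight $|x'|_k^{-(\alpha+p)}(\log\frac{R}{|x'|_k})^{-(\beta+p)}$ appropriately and applying Hölder's inequality with exponents $p/(p-1)$ and $p$ then gives $\bigl|\tfrac{\beta+p}{p}\bigr|\,B^{1/p}\le\|\xi\|_{L^p(0<|x'|_k<R)}$ with $\xi$ as in \eqref{xi_ball}, i.e.\ \eqref{ball_id_ineq} (equivalently, (i) drops out of \eqref{ball_id} by discarding its last two summands, both $\le0$ under the sign hypothesis). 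When $\beta\ne-p$, sharpness follows from the equality case in this Hölder step, whose natural non-admissible extremizer is $h_1(x)=(\log\frac{R}{|x'|_k})^{(\beta+p)/p}$; truncating $h_1$ to functions concentrating near $|x'|_k=R$ realizes the constant in the limit.

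For part~(ii) I would run the algebra of \eqref{Cp_1}--\eqref{Cp_2} with $\xi,\eta$ as in \eqref{xi_ball}, \eqref{eta_ball}, so that $\xi-\eta=-\frac{\beta+p}{p}\frac{f}{|x'|_k^{(\alpha+p)/p}(\log\frac{R}{|x'|_k})^{(\beta+p+1)/p}}$ and $\eta=\xi-(\xi-\eta)$. Rearranging the definition \eqref{Cp_ball} and integrating over $\{0<|x'|_k<R\}$ yields
\[
\int|\xi-\eta|^p\,dx=\int|\xi|^p\,dx-p\int|\xi-\eta|^{p-2}\operatorname{Re}\bigl((\xi-\eta)\cdot\overline{\eta}\bigr)\,dx-\int C_p(\xi,\eta)\,dx,
\]
where $\int|\xi-\eta|^p\,dx$ equals the left side of \eqref{ball_id} and $\int|\xi|^p\,dx$ its first right-hand term, so it remains only to identify $-p\int|\xi-\eta|^{p-2}\operatorname{Re}\bigl((\xi-\eta)\cdot\overline{\eta}\bigr)\,dx$ with the middle summand of \eqref{ball_id}. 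Using $\operatorname{Re}\bigl((\xi-\eta)\cdot\overline{\eta}\bigr)=\operatorname{Re}\bigl((\xi-\eta)\cdot\overline{\xi}\bigr)-|\xi-\eta|^2$, the first piece reduces, after cancellation of the $|x'|_k$- and $\log$-weights, to a constant multiple of $\operatorname{Re}\int\frac{f|f|^{p-2}\overline{x'\cdot\nabla_k f}}{|x'|_k^{\alpha+p}(\log\frac{R}{|x'|_k})^{\beta+p}}dx$, to which the basic relation applies, producing multiples of $A$ and of $B$; the second piece contributes a further multiple of $B$. The two $B$-contributions cancel identically because $(\beta+p)\cdot\frac{\beta+p}{p}=p\bigl(\frac{\beta+p}{p}\bigr)^2$, leaving precisely $-(k-\alpha-p)\frac{\beta+p}{p}\bigl|\frac{\beta+p}{p}\bigr|^{p-2}A$, i.e.\ the middle summand of \eqref{ball_id}.

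Finally, for the non-attainment statement with $p\ge2$: the bound $C_p(\xi,\eta)\ge c_p|\eta|^p$ recalled in \eqref{Cp_cylind_proof} gives $C_p\equiv0\iff\eta\equiv0$, i.e.\ $|x'|_k\,\partial_{|x'|_k}f+\frac{\beta+p}{p}\frac{f}{\log(R/|x'|_k)}=0$; integrating this ODE in the radial variable $|x'|_k$ via $\int\frac{dr}{r\log(R/r)}=-\log\log\frac{R}{r}$ forces $f=(\log\frac{R}{|x'|_k})^{(\beta+p)/p}\varphi(x'/|x'|_k,x'')$, and when $k=\alpha+p$ the left side of \eqref{ball_id_ineq} then equals $\int\frac{|\varphi(x'/|x'|_k,x'')|^p}{|x'|_k^{k}\log(R/|x'|_k)}dx$, which is infinite because its radial factor $\int_0\frac{dr}{r\log(R/r)}$ diverges at $0$; hence no nontrivial extremizer lies in $C_0^\infty(\{0<|x'|_k<R\})$ and the sharp constant is not attained. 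I expect the main obstacle to be purely bookkeeping — carrying the two-term divergence through the whole computation and verifying the exact cancellation of the $B$-terms in $\int C_p(\xi,\eta)$ — with everything else parallel to the proofs of Theorems~\ref{Th_Lp_Hardy_cyl} and \ref{Th_Lp_hom}.
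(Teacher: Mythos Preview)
Your proposal is correct and follows essentially the same approach as the paper: the same logarithmic divergence identity, the same integration-by-parts relation (your ``basic relation'' is exactly the paper's \eqref{used_int_by_part} rearranged), the same H\"older step for (i), the same formal extremizer $(\log\frac{R}{|x'|_k})^{(\beta+p)/p}$ for sharpness, the same expansion of $C_p(\xi,\eta)$ for (ii), and the same ODE argument for non-attainment. Your treatment of (ii) is actually slightly more explicit than the paper's --- you track the two $B$-contributions and verify their cancellation, whereas the paper simply asserts the outcome of integrating \eqref{used_not_s} --- and your remark that sharpness requires approximating the non-admissible extremizer is a detail the paper omits.
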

\begin{remark} The special case $p=2$, $\beta=-1$ and $\alpha=k-2$ of \eqref{ball_id} was obtained in \cite[Corollary 1.3]{DP21} by the factorization method, which does not work in general $L^{p}$. Taking into account $C_{p}\geq 0$ and dropping the last term in \eqref{ball_id} when $k=n$ yields \cite[Theorem 3]{Sano_arxiv}.
    \end{remark}

\begin{proof}[Proof of Theorem \ref{th_ball}] It is easy to see that when $\beta=-p$ there is nothing to prove. So, let us prove the theorem for $\beta\neq-p$.

Taking into account
\begin{equation*}
   \operatorname{div}_{k}\left(\frac{x'}{|x'|_{k}^{\alpha+p}\left(log\frac{R}{|x'|_{k}}\right)^{\beta+p}}\right)=
   \frac{k-\alpha-p}{|x'|_{k}^{\alpha+p}\left(log\frac{R}{|x'|_{k}}\right)^{\beta+p}}+
   \frac{\beta+p}{|x'|_{k}^{\alpha+p}\left(log\frac{R}{|x'|_{k}}\right)^{\beta+p+1}}
\end{equation*}
we write 
\begin{multline}\label{used_int_by_part}
 \int_{0<|x'|_{k}<R}\frac{|f|^p}{|x'|_{k}^{\alpha+p}\left(log\frac{R}{|x'|_{k}}\right)^{\beta+p+1}} dx \\
 =\frac{1}{\beta+p}\int_{0<|x'|_{k}<R}\left[ \operatorname{div}_{k}\left(\frac{x'}{|x'|_{k}^{\alpha+p}\left(log\frac{R}{|x'|_{k}}\right)^{\beta+p}}\right)-\frac{k-\alpha-p}{{|x'|_{k}^{\alpha+p}\left(log\frac{R}{|x'|_{k}}\right)^{\beta+p}}}\right]|f|^{p} dx \\
 =-\frac{p}{\beta+p}\operatorname{Re}\int_{0<|x'|_{k}<R}\frac{|f|^{p-2}f(\overline{x'\cdot \nabla_{k} f} )}{|x'|_{k}^{\alpha+p}\left(log\frac{R}{|x'|_{k}}\right)^{\beta+p}}dx\\-\frac{k-\alpha-p}{\beta+p}\int_{0<|x'|_{k}<R}\frac{|f|^p}{|x'|_{k}^{\alpha+p}\left(log\frac{R}{|x'|_{k}}\right)^{\beta+p}} dx.
\end{multline}
Now, using H\"older's inequality for $1/p+1/p'=1$ and $(k-\alpha-p)/(\beta+p)\geq 0$ we obtain
\begin{multline}\label{used_int_by_part_Holder}
 \int_{0<|x'|_{k}<R}\frac{|f|^p}{|x'|_{k}^{\alpha+p}\left(log\frac{R}{|x'|_{k}}\right)^{\beta+p+1}} dx \leq
\left|\frac{p}{\beta+p}\right|\int_{0<|x'|_{k}<R}\frac{|f|^{p-1}|x'\cdot \nabla_{k} f|}{|x'|_{k}^{\frac{\alpha+p}{p}+\frac{\alpha+p}{p'}}\left(log\frac{R}{|x'|_{k}}\right)^{\frac{\beta+p+1}{p'}+\frac{\beta+1}{p}}}dx\\
\leq \left|\frac{p}{\beta+p}\right|\left( \int_{0<|x'|_{k}<R}\frac{|f|^p}{|x'|_{k}^{\alpha+p}\left(log\frac{R}{|x'|_{k}}\right)^{\beta+p+1}} dx\right)^{\frac{1}{p'}}
\left( \int_{0<|x'|_{k}<R}\frac{|x'\cdot \nabla_{k} f|^p}{|x'|_{k}^{\alpha+p}\left(log\frac{R}{|x'|_{k}}\right)^{\beta+1}} dx\right)^{\frac{1}{p}}
\end{multline}
yielding \eqref{ball_id_ineq}.

Observe that
$$
h_{2}(x)=\left(log\frac{R}{|x'|_{k}}\right)^{\frac{\beta+p}{p}}, \;k\neq \alpha
$$
satisfies the following H\"older’s equality condition
$$
\left|\frac{p}{\beta+p}\right|^{p}\frac{|x'\cdot \nabla_{k} f|^p}{|x'|_{k}^{\alpha+p}\left(log\frac{R}{|x'|_{k}}\right)^{\beta+1}}=\frac{|h_{2}(x)|^{p}}{|x'|_{k}^{\alpha+p}\left(log\frac{R}{|x'|_{k}}\right)^{\beta+p+1}}
$$
implying the sharpness of the constant.

Let us now prove Part (ii). Recalling the definitions \eqref{xi_ball} and \eqref{eta_ball}, the formula \eqref{Cp_ball} can be rewritten as
\begin{multline}\label{used_not_s}
C_{p}(\xi,\eta)=\left|\frac{x'\cdot\nabla_{k}}{|x'|_{k}^{\frac{\alpha+p}{p}}\left(log\frac{R}{|x'|_{k}}\right)^{\frac{\beta+1}{p}}}f\right|^{p}-\left|-\frac{\beta+p}{p}\frac{f}{|x'|_{k}^{\frac{\alpha+p}{p}}\left(log\frac{R}{|x'|_{k}}\right)^{\frac{\beta+p+1}{p}}}\right|^{p}\\-p\left|-\frac{\beta+p}{p}\frac{f}{|x'|_{k}^{\frac{\alpha+p}{p}}\left(log\frac{R}{|x'|_{k}}\right)^{\frac{\beta+p+1}{p}}}\right|^{p-2}\operatorname{Re}\left(-\frac{\beta+p}{p}\frac{f}{|x'|_{k}^{\frac{\alpha+p}{p}}\left(log\frac{R}{|x'|_{k}}\right)^{\frac{\beta+p+1}{p}}}\right)\\ \times \overline{\left(\frac{x'\cdot\nabla_{k}}{|x'|_{k}^{\frac{\alpha+p}{p}}\left(log\frac{R}{|x'|_{k}}\right)^{\frac{\beta+1}{p}}}f+\frac{\beta+p}{p}\frac{f}{|x'|_{k}^{\frac{\alpha+p}{p}}\left(log\frac{R}{|x'|_{k}}\right)^{\frac{\beta+p+1}{p}}}\right)}.
\end{multline}
Let us take the integral from the both side of \eqref{used_not_s} and by using the identity \eqref{used_int_by_part} we have
\begin{multline*}
\int_{0<|x'|_{k}<R}C_{p}(\xi,\eta)dx=\int_{0<|x'|_{k}<R}\left|\frac{x'\cdot\nabla_{k}}{|x'|_{k}^{\frac{\alpha+p}{p}}\left(log\frac{R}{|x'|_{k}}\right)^{\frac{\beta+1}{p}}}f\right|^{p}dx\\-\left|\frac{\beta+p}{p}\right|^{p}\int_{0<|x'|_{k}<R}\left|\frac{f}{|x'|_{k}^{\frac{\alpha+p}{p}}\left(log\frac{R}{|x'|_{k}}\right)^{\frac{\beta+p+1}{p}}}\right|^{p}dx\\-(k-\alpha-p)\left(\frac{\beta+p}{p}\right)\left|\frac{\beta+p}{p}\right|^{p-2}\int_{0<|x'|_{k}<R}\frac{|f|^p}{|x'|_{k}^{\alpha+p}\left(log\frac{R}{|x'|_{k}}\right)^{\beta+p}} dx,
\end{multline*}
which implies \eqref{ball_id}.

Recall from \eqref{Cp_cylind_proof} that $C_{p}(\xi,\eta)=0$ if and only if $\eta=0$. Therefore, the second term on the right-hand side of \eqref{ball_id} vanishes if and only if 
$$
\frac{x'\cdot\nabla_{k}}{|x'|_{k}^{\frac{\alpha+p}{p}}\left(log\frac{R}{|x'|_{k}}\right)^{\frac{\beta+1}{p}}}f+\frac{\beta+p}{p}\frac{f}{|x'|_{k}^{\frac{\alpha+p}{p}}\left(log\frac{R}{|x'|_{k}}\right)^{\frac{\beta+p+1}{p}}}=0
$$
which is equivalent to
$$
\partial_{|x'|_{k}}\left(\left(log\frac{R}{|x'|_{k}}\right)^{-\frac{\beta+p}{p}}f\right)=0.
$$
Then there exists a function $\varphi$ such that
$$
f(x)=\left(log\frac{R}{|x'|_{k}}\right)^{\frac{\beta+p}{p}}\varphi\left({\frac{x'}{|x'|_{k}}},x''\right).
$$
In this case 
$$\frac{|f(x)|^{p}}{|x'|_{k}^{\alpha+p}\left(log\frac{R}{|x'|_{k}}\right)^{\beta+p+1}}=\frac{|\varphi(\frac{x'}{|x'|_{k}},x'')|^{p}}{|x'|_{k}^{k}log\frac{R}{|x'|_{k}}}$$
whose integral on $\{x=(x',x'')\in \mathbb{R}^{k}\times\mathbb{R}^{n-k}: 0<|x'|_{k}<R\}$ is infinite. So, the left-hand side of \eqref{Lp_identity_cyl} is finite if and only if $\varphi=f=0$, then the sharp constant is not attained. 

The proof of Theorem \ref{th_ball} is completed.
\end{proof}
\subsection{$L^p$-Hardy type inequalities and identities with logarithmic weights on homogeneous Lie groups} In this subsection we establish $L^p$-Hardy type inequalities and identities with logarithmic weights for the radial derivative operator $\mathcal{R}_{|\cdot|}$ with respect to any homogeneous quasi-norm $|\cdot|$ on a general homogeneous Lie group $\mathbb{G}$.
\begin{theorem}\label{log_hom} 
Let $\mathbb{G}$ be a homogeneous Lie group of homogeneous dimension $Q$ and let $|\cdot|$ be any homogeneous quasi-norm on $\mathbb{G}$. Let $1<p<\infty$ and $\alpha,\beta \in \mathbb{R}$. Let $B(0,R) \subset \mathbb{G}$ be a quasi-ball with radius $R>0$. 
\begin{enumerate}[label=(\roman*)]
\item Let $(Q-\alpha-p)(\beta+p)\geq 0$. Then for all complex-valued functions $f\in C_0^{\infty}(B(0,R)\backslash \{0\})$ we have
     \begin{equation}\label{ball_id_ineq_hom}
     \left|\frac{\beta+p}{p}\right|^{p}\quad   \left\|\frac{f}{|x|^{\frac{\alpha+p}{p}}\left(log\frac{R}{|x|}\right)^{\frac{\beta+p+1}{p}}}\right\|_{L^{p}(B'(0,R))}^p\leq \left\|\frac {\mathcal{R}_{|x|}f}{|x|^{\frac{\alpha}{p}}\left(log\frac{R}{|x|}\right)^{\frac{\beta+1}{p}}}f\right\|_{{L}^{p}(B(0,R))}^p, 
     \end{equation}
     where the constant $\left|\frac{\beta+p}{p}\right|^{p}$ is sharp when $\beta\neq - p$.
     \item Moreover, for all complex-valued functions $f\in C_0^{\infty}(B(0,R)\backslash \{0\})$ 
we have the identity 
\begin{multline}\label{log_id_hom}
\left(\frac{\beta+p}{p}\right)^{p}\left\|\frac{f}{|x|^{\frac{\alpha+p}{p}}\left(log\frac{R}{|x|}\right)^{\frac{\beta+p+1}{p}}}\right\|^{p}_{L^{p}(B(0,R))}=
\left\|\frac{\mathcal{R}_{|x|}f}{|x|^{\frac{\alpha}{p}}\left(log\frac{R}{|x|}\right)^{\frac{\beta+1}{p}}}\right\|^{p}_{L^{p}(B(0,R))}\\-(Q-\alpha-p)\left(\frac{\beta+p}{p}\right)\left|\frac{\beta+p}{p}\right|^{p-2}\int_{B(0,R)}\frac{|f|^p}{|x|^{\alpha+p}\left(log\frac{R}{|x|}\right)^{\beta+p}} dx
\\-p \int_{B(0,R)}C_{p}\left(\frac{\mathcal{R}_{|x|}f}{|x|^{\frac{\alpha}{p}}\left(log\frac{R}{|x|}\right)^{\frac{\beta+1}{p}}},\frac{\mathcal{R}_{|x|}f}{|x|^{\frac{\alpha}{p}}\left(log\frac{R}{|x|}\right)^{\frac{\beta+1}{p}}}+\frac{\beta+p}{p}\frac{f}{|x|^{\frac{\alpha+p}{p}}\left(log\frac{R}{|x|}\right)^{\frac{\beta+p+1}{p}}}\right)dx
\end{multline}
for all complex-valued functions $f\in C_0^{\infty}(B(0,R)\backslash \{0\})$, where \begin{equation}\label{Cp_ball_homo}
C_{p}(\xi,\eta)=|\xi|^{p}-|\xi-\eta|^{p}-p|\xi-\eta|^{p-2}\operatorname{Re}(\xi-\eta)\cdot\overline{\eta}\geq 0.
\end{equation}
\end{enumerate}
Furthermore, when $Q=\alpha+p$ for all $p\geq2$, the functional $C_p$ vanishes if and only if $$f=\left(log\frac{R}{|x|}\right)^{\frac{\beta+p}{p}}\varphi\left({\frac{x}{|x|}}\right)$$ 
for some function $\varphi:\mathfrak{S}\rightarrow\mathbb{C}$, which makes the left-hand side of \eqref{ball_id_ineq_hom} infinite unless $\varphi=f=0$ on the basis of the non-integrability of the function
$$\frac{|f(x)|^{p}}{|x|^{\alpha+p}\left(log\frac{R}{|x|}\right)^{\beta+p+1}}=\frac{|\varphi(\frac{x}{|x|})|^{p}}{|x|^{Q}log\frac{R}{|x|}}$$
on $\mathbb{R}^n$. Thus, the sharp constant $\left|\frac{\beta+p}{p}\right|^{p}$ in \eqref{ball_id_ineq_hom} is not attained in $C_0^{\infty}(B(0,R)\backslash \{0\})$.
\end{theorem}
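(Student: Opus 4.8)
The plan is to adapt the proof of Theorem~\ref{th_ball} to the homogeneous group, replacing the Euclidean divergence identity on $\mathbb{R}^{k}$ by the polar decomposition on $\mathbb{G}$ together with a one-dimensional integration by parts in the radial variable, in the style of the proof of Theorem~\ref{Th_Lp_hom}. As there, the case $\beta=-p$ is trivial, so I assume $\beta\neq -p$. Writing $x=D_{r}y$ with $(r,y)=(|x|,x/|x|)\in(0,R)\times\mathfrak{S}$ and using $\mathcal{R}_{|x|}f(D_{r}y)=\frac{d}{dr}f(D_{r}y)$, the starting point is the elementary identity
$$\frac{d}{dr}\!\left(\frac{r^{Q-\alpha-p}}{(\log\frac{R}{r})^{\beta+p}}\right)=(Q-\alpha-p)\frac{r^{Q-1-\alpha-p}}{(\log\frac{R}{r})^{\beta+p}}+(\beta+p)\frac{r^{Q-1-\alpha-p}}{(\log\frac{R}{r})^{\beta+p+1}}.$$
First I would multiply by $|f(D_{r}y)|^{p}$, integrate over $r\in(0,R)$ and integrate by parts (the boundary terms at $r=0$ and at $r=R$ vanish because, for each fixed $y$, the function $r\mapsto f(D_{r}y)$ vanishes near $0$ and near $R$ since $\operatorname{supp} f$ is a compact subset of $B(0,R)\setminus\{0\}$), then use $\frac{d}{dr}|f(D_{r}y)|^{p}=p|f|^{p-2}\operatorname{Re}(f\,\overline{\mathcal{R}_{|x|}f})$ and integrate in $y$ against $d\sigma$. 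This yields the homogeneous analogue of \eqref{used_int_by_part}:
\begin{multline*}
\int_{B(0,R)}\frac{|f|^{p}}{|x|^{\alpha+p}(\log\frac{R}{|x|})^{\beta+p+1}}\,dx
=-\frac{p}{\beta+p}\operatorname{Re}\int_{B(0,R)}\frac{|f|^{p-2}f\,\overline{\mathcal{R}_{|x|}f}}{|x|^{\alpha+p-1}(\log\frac{R}{|x|})^{\beta+p}}\,dx\\
-\frac{Q-\alpha-p}{\beta+p}\int_{B(0,R)}\frac{|f|^{p}}{|x|^{\alpha+p}(\log\frac{R}{|x|})^{\beta+p}}\,dx.
\end{multline*}

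For part (i) I would use the hypothesis $(Q-\alpha-p)(\beta+p)\ge0$ to discard the last, sign-definite term, bound $|\operatorname{Re}(\cdots)|$ by $|f|^{p-1}|\mathcal{R}_{|x|}f|$, split the weight $|x|^{-(\alpha+p-1)}(\log\frac{R}{|x|})^{-(\beta+p)}$ as a factor raised to the power $1/p'$ times a factor raised to the power $1/p$ matched to the two norms, and apply H\"older with exponents $p'$ and $p$; this produces \eqref{ball_id_ineq_hom} exactly as \eqref{used_int_by_part_Holder} produces \eqref{ball_id_ineq}. For the sharpness of $\left|\frac{\beta+p}{p}\right|^{p}$ when $\beta\ne -p$, the function $h(x)=(\log\frac{R}{|x|})^{(\beta+p)/p}$ realises equality in the H\"older step (as $h_{2}$ does in Theorem~\ref{th_ball}), and a standard cutoff of $h$ near the singular sets $\{|x|=0\}$ and $\{|x|=R\}$ turns this into a genuine proof that the constant cannot be lowered.

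For part (ii), set $\xi$ and $\eta$ equal to the two arguments of $C_{p}$ in \eqref{log_id_hom}, so that $\xi-\eta=-\frac{\beta+p}{p}\,f\,|x|^{-(\alpha+p)/p}(\log\frac{R}{|x|})^{-(\beta+p+1)/p}$. Expanding $C_{p}(\xi,\eta)=|\xi|^{p}-|\xi-\eta|^{p}-p|\xi-\eta|^{p-2}\operatorname{Re}(\xi-\eta)\cdot\overline{\eta}$, integrating over $B(0,R)$, and recognising the resulting cross term as a multiple of $\operatorname{Re}\int|f|^{p-2}f\,\overline{\mathcal{R}_{|x|}f}\,|x|^{-(\alpha+p-1)}(\log\frac{R}{|x|})^{-(\beta+p)}\,dx$, I would substitute the displayed integration-by-parts identity; the weighted $|f|^{p}$-contributions then combine and leave precisely \eqref{log_id_hom}, where $C_{p}\ge0$ is the fact recalled after \eqref{Cp_} and in \eqref{Cp_cylind_proof}.

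Finally, for the non-attainment claim when $Q=\alpha+p$: for $p\ge 2$ the bound $C_{p}(\xi,\eta)\ge c_{p}|\eta|^{p}$ from \eqref{Cp_cylind_proof} forces $C_{p}(\xi,\eta)\equiv0$ to be equivalent to $\eta\equiv0$, i.e. to $\mathcal{R}_{|x|}\big((\log\frac{R}{|x|})^{-(\beta+p)/p}f\big)=0$, hence $f(x)=(\log\frac{R}{|x|})^{(\beta+p)/p}\varphi(x/|x|)$ for some $\varphi\colon\mathfrak{S}\to\mathbb{C}$. Then $\frac{|f|^{p}}{|x|^{\alpha+p}(\log\frac{R}{|x|})^{\beta+p+1}}=\frac{|\varphi(x/|x|)|^{p}}{|x|^{Q}\log\frac{R}{|x|}}$, whose integral over $B(0,R)$ equals $\big(\int_{\mathfrak{S}}|\varphi|^{p}\,d\sigma\big)\int_{0}^{R}\frac{dr}{r\log\frac{R}{r}}=+\infty$ by the polar decomposition; so no nonzero $f$ of this form lies in $C_{0}^{\infty}(B(0,R)\setminus\{0\})$, and combined with \eqref{log_id_hom} (whose middle term vanishes when $Q=\alpha+p$) this shows the sharp constant in \eqref{ball_id_ineq_hom} is not attained. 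I expect the only genuinely delicate points to be the vanishing of the boundary terms in the radial integration by parts despite the logarithmic singularity at $|x|=R$ --- which works precisely because $f$ is supported strictly inside $B(0,R)\setminus\{0\}$ --- and converting the formal H\"older-equality computation for $h$ into a rigorous sharpness argument via an admissible cutoff; the remainder transcribes the proofs of Theorems~\ref{Th_Lp_hom} and \ref{th_ball}.
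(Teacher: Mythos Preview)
Your proposal is correct and follows essentially the same approach as the paper's proof: the same radial derivative identity for $r^{Q-\alpha-p}(\log\tfrac{R}{r})^{-(\beta-p)}$, the same polar decomposition plus one-dimensional integration by parts to obtain the key identity, the same H\"older step for part (i) with the same optimizer $h(x)=(\log\tfrac{R}{|x|})^{(\beta+p)/p}$ for sharpness, the same expansion of $C_{p}(\xi,\eta)$ for part (ii), and the same $\eta=0$ ODE argument for non-attainment. The only cosmetic difference is that you spell out explicitly the vanishing of boundary terms and the cutoff needed for sharpness, whereas the paper simply refers back to the cylindrical case.
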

\begin{remark} In the Abelian case $\mathbb{G}=(\mathbb{R}^{n};+)$, hence $Q=n$, dropping the last term in \eqref{log_id_hom} yields the result \cite[Theorem 3]{Sano_arxiv} with any homogeneous quasi-norm instead of the Euclidean norm.
\end{remark}
\begin{proof}[Proof of Theorem \ref{log_hom}]
Taking into account
\begin{equation*}
\mathcal{R}_{r}\left(\frac{r^{Q-\alpha-p}}{\left(log\frac{R}{r}\right)^{\beta+p}}\right)=
\frac{(Q-\alpha-p)r^{Q-\alpha-p-1}}{\left(log\frac{R}{r}\right)^{\beta+p}}+(\beta+p)\frac{r^{Q-\alpha-p-1}}{\left(log\frac{R}{r}\right)^{\beta+p+1}},
\end{equation*}
then introducing polar coordinates $(r, y)=\left(|x|, \frac{x}{|x|}\right) \in$ $(0, \infty) \times \wp$ on $\mathbb{G}$, where the quasi-sphere $\wp$, we have 
\begin{multline*}
\int_{B(0,R)} \frac{|f(x)|^{p}}{|x|^{\alpha+p}\left(log\frac{R}{|x|}\right)^{\beta+p+1}}dx= \int_{0}^{\infty}\int_{\mathfrak S}\frac{|f(ry)|^{p}r^{Q-1-\alpha-p}}{\left(log\frac{R}{r}\right)^{\beta+p+1}}d\sigma(y)dr\\=\frac{1}{\beta+p}\int_{0}^{\infty}\int_{\mathfrak S}\left( \mathcal{R}_{r}\left(\frac{r^{Q-\alpha-p}}{\left(log\frac{R}{r}\right)^{\beta+p}}\right)-\frac{(Q-\alpha-p)r^{Q-\alpha-p-1}}{\left(log\frac{R}{r}\right)^{\beta+p}}\right)|f(ry)|^{p}d\sigma(y)dr. 
\end{multline*}
Then the integrating by parts implies that
\begin{multline*}
\int_{0}^{\infty}\int_{\mathfrak S}\frac{|f(ry)|^{p}r^{Q-1-\alpha-p}}{\left(log\frac{R}{r}\right)^{\beta+p+1}}d\sigma(y)dr \\=\frac{1}{\beta+p}\int_{0}^{\infty}\int_{\mathfrak S}\left( \mathcal{R}_{r}\left(\frac{r^{Q-\alpha-p}}{\left(log\frac{R}{r}\right)^{\beta+p}}\right)-\frac{(Q-\alpha-p)r^{Q-\alpha-p-1}}{\left(log\frac{R}{r}\right)^{\beta+p}}\right)|f(ry)|^{p}d\sigma(y)dr
 \\=-\frac{p}{\beta+p}\operatorname{Re}\int_{0}^{\infty}\int_{\mathfrak S}f(ry)|f(ry)|^{p-2}\overline{{\frac{df(ry)}{dr}}}\frac{r^{Q-\alpha-p}}{\left(log\frac{R}{r}\right)^{\beta+p}}d\sigma(y)dr 
 \end{multline*}
     \begin{multline}\label{used_in_by_parts_hom}
 -\frac{(Q-\alpha-p)}{\beta+p}\int_{0}^{\infty}\int_{\mathfrak S}\frac{r^{Q-\alpha-p-1}}{\left(log\frac{R}{r}\right)^{\beta+p}}|f(ry)|^{p}d\sigma(y)dr\\
 \\
 = -\frac{p}{\beta+p}\operatorname{Re}\int_{B(0,R)}\frac{f(x)|f(x)|^{p-2}\overline{{\mathcal{R}_{|x|}f}}}{
|x|^{p+\alpha-1}\left(log\frac{R}{|x|}\right)^{\beta+p}}dx-\frac{(Q-\alpha-p)}{\beta+p}\int_{B(0,R)}\frac{|f(x)|^{p}}{|x|^{\alpha+p}\left(log\frac{R}{|x|}\right)^{\beta+p}}dx. 
\end{multline}
Now, as in the cylindrical case above, recalling $(Q-\alpha-p)/(\beta+p)\geq 0$ we use the H\"older inequality for $1/p+1/p'=1$ to obtain
\begin{multline}\label{used_int_by_part_Holder_hom}
 \int_{B(0,R)}\frac{|f|^p}{|x|^{\alpha+p}\left(log\frac{R}{|x|}\right)^{\beta+p+1}} dx \leq
\left|\frac{p}{\beta+p}\right|\int_{B(0,R)}\frac{|f|^{p-1}|\mathcal{R}_{|x|} f|}{|x|^{\frac{\alpha}{p}+\frac{\alpha+p}{p'}}\left(log\frac{R}{|x|}\right)^{\frac{\beta+p+1}{p'}+\frac{\beta+1}{p}}}dx\\
\leq \left|\frac{p}{\beta+p}\right|\left( \int_{B(0,R)}\frac{|f|^p}{|x|^{\alpha+p}\left(log\frac{R}{|x|}\right)^{\beta+p+1}} dx\right)^{\frac{1}{p'}}
\left( \int_{B(0,R)}\frac{|\mathcal{R}_{|x|} f|^p}{|x|^{\alpha}\left(log\frac{R}{|x|}\right)^{\beta+1}} dx\right)^{\frac{1}{p}}
\end{multline}
yielding \eqref{ball_id_ineq_hom}. The sharpness of the constant can be shown as in the cylindrical case.

Let us now prove Part (ii). Applying notations
$$
\xi:=\frac{\mathcal{R}_{|x|}f}{|x|^{\frac{\alpha}{p}}\left(log\frac{R}{|x|}\right)^{\frac{\beta+1}{p}}}
$$
and
$$
\eta:=\frac{\mathcal{R}_{|x|}f}{|x|^{\frac{\alpha}{p}}\left(log\frac{R}{|x|}\right)^{\frac{\beta+1}{p}}}+\frac{\beta+p}{p}\frac{f}{|x|^{\frac{\alpha+p}{p}}\left(log\frac{R}{|x|}\right)^{\frac{\beta+p+1}{p}}}
$$
formula \eqref{Cp_ball_homo} can be rewritten as
\begin{multline}\label{used_not_hom}
C_{p}(\xi,\eta)=\left|\frac{\mathcal{R}_{|x|}f}{|x|^{\frac{\alpha}{p}}\left(log\frac{R}{|x|}\right)^{\frac{\beta+1}{p}}}\right|^{p}-\left|-\frac{\beta+p}{p}\frac{f}{|x|^{\frac{\alpha+p}{p}}\left(log\frac{R}{|x|}\right)^{\frac{\beta+p+1}{p}}}\right|^{p}-\\p\left|-\frac{\beta+p}{p}\frac{f}{|x|^{\frac{\alpha+p}{p}}\left(log\frac{R}{|x|}\right)^{\frac{\beta+p+1}{p}}}\right|^{p-2}\operatorname{Re}\left(-\frac{\beta+p}{p}\frac{f}{|x|^{\frac{\alpha+p}{p}}\left(log\frac{R}{|x|}\right)^{\frac{\beta+p+1}{p}}}\right)\\ \times\overline{\left(\frac{\mathcal{R}_{|x|}f}{|x|^{\frac{\alpha}{p}}\left(log\frac{R}{|x|}\right)^{\frac{\beta+1}{p}}}+\frac{\beta+p}{p}\frac{f}{|x|^{\frac{\alpha+p}{p}}\left(log\frac{R}{|x|}\right)^{\frac{\beta+p+1}{p}}}\right)}
\end{multline}
Let us take the integral from the both side of \eqref{used_not_hom} and by using the identity \eqref{used_in_by_parts_hom} we have 
\begin{multline}
\int_{B(0,R)}C_{p}(\xi,\eta)=\int_{B(0,R)}\left|\frac{\mathcal{R}_{|x|}f}{|x|^{\frac{\alpha}{p}}\left(log\frac{R}{|x|}\right)^{\frac{\beta+1}{p}}}\right|^{p}-\int_{B(0,R)}\left|\frac{\beta+p}{p}\frac{f}{|x|^{\frac{\alpha+p}{p}}\left(log\frac{R}{|x|}\right)^{\frac{\beta+p+1}{p}}}\right|^{p}-\\ 
-(Q-\alpha-p)\left(\frac{\beta+p}{p}\right)\left|\frac{\beta+p}{p}\right|^{p-2}\int_{B(0,R)}\frac{|f|^p}{|x|^{\alpha+p}\left(log\frac{R}{|x|}\right)^{\beta+p}} dx
\end{multline}
The equality \eqref{log_id_hom} is proved. 

Recall from \eqref{Cp_cylind_proof} that $C_{p}(\xi,\eta)=0$ if and only if $\eta=0$. Therefore, the second term on the right-hand side of \eqref{log_id_hom} vanishes if and only if 
$$
\frac{\mathcal{R}_{|x|}f}{|x|^{\frac{\alpha}{p}}\left(log\frac{R}{|x|}\right)^{\frac{\beta+1}{p}}}+\frac{\beta+p}{p}\frac{f}{|x|^{\frac{\alpha+p}{p}}\left(log\frac{R}{|x|}\right)^{\frac{\beta+p+1}{p}}}=0
$$
which is equivalent to
$$
\mathcal{R}_{|x|}\left(\left(log\frac{R}{|x|}\right)^{-\frac{\beta+p}{p}}f\right)=0.
$$
Then there exists a function $\varphi:\mathfrak{S}\rightarrow\mathbb{C}$ such that
$$f=\left(log\frac{R}{|x|}\right)^{\frac{\beta+p}{p}}\varphi\left({\frac{x}{|x|}}\right)$$
In this case, we have
$$\frac{|f(x)|^{p}}{|x|^{\alpha+p}\left(log\frac{R}{|x|}\right)^{\beta+p+1}}=\frac{|\varphi(\frac{x}{|x|})|^{p}}{|x|^{Q}log\frac{R}{|x|}}$$
whose integral
$$
\int_{\mathbb{G}}\frac{|f(x)|^{p}}{|x|^{\alpha+p}\left(log\frac{R}{|x|}\right)^{\beta+p+1}} dx
$$
is infinite. So, the left-hand side of \eqref{ball_id_ineq_hom} is finite if and only if $\varphi=f=0$, that is, the sharp constant is not attained. 
\end{proof}

\subsection*{$L^p$-Hardy type inequalities and identities with logarithmic weights on stratified Lie groups}
We extend the $L^p$-Hardy identity with logarithmic type function from the previous subsection to stratified Lie groups.
\begin{theorem}\label{TH_log_st} 
Let $\mathbb{G}$ be a stratified Lie group with $\textit{N}$ being the dimension of the
first stratum. We denote by $x'$ the variables from the first stratum of $\mathbb{G}$. Let $1<p<\infty$ and $\alpha, \beta \in \mathbb{R}$. Let $B'(0, R)$ be as in \eqref{cylin_ball}.
\begin{enumerate}[label=(\roman*)]
    \item Let $(N-\alpha-p)(\beta+p)\geq 0$. Then for all complex-valued functions $f\in C_0^{\infty}(B'(0,R)\backslash \{x'=0\})$ we have
     \begin{equation}\label{ball_id_ineq_strat}
     \left|\frac{\beta+p}{p}\right|^{p}\quad   \left\|\frac{f}{|x'|^{\frac{\alpha+p}{p}}\left(log\frac{R}{|x'|}\right)^{\frac{\beta+p+1}{p}}}\right\|_{L^{p}(B'(0,R))}^p\leq \left\|\frac {x' \cdot \nabla_{H}}{|x'|^{\frac{\alpha+p}{p}}\left(log\frac{R}{|x'|}\right)^{\frac{\beta+1}{p}}}f\right\|_{{L}^{p}(B'(0,R))}^p, 
     \end{equation}
     where the constant $\left|\frac{\beta+p}{p}\right|^{p}$ is sharp when $\beta\neq - p$.
     \item Moreover, for all complex-valued functions $f\in C_0^{\infty}(B'(0,R)\backslash \{x'=0\})$ we have the following identity: 
\begin{multline}\label{log_stratified_id}
\left({\frac{\beta+p}{p}}\right)^{p}\quad \left\|\frac{f}{|x'|^{\frac{\alpha+p}{p}}\left(log\frac{R}{|x'|}\right)^{\frac{\beta+p+1}{p}}}\right\|_{L^{p}(B'(0,R))}^p=\left\|\frac {x' \cdot \nabla_{H}}{|x'|^{\frac{\alpha+p}{p}}\left(log\frac{R}{|x'|}\right)^{\frac{\beta+1}{p}}}f\right\|_{{L}^{p}(B'(0,R))}^p \\-(N-\alpha-p)\left(\frac{\beta+p}{p}\right)\left|\frac{\beta+p}{p}\right|^{p-2}\int_{B'(0,R)}\frac{|f|^p}{|x'|^{\alpha+p}\left(log\frac{R}{|x'|}\right)^{\beta+p}}dx
\end{multline}
\begin{multline*}
-p
\int_{B'(0,R)}C_p\left(\frac{x'\cdot\nabla_{H}f}{|x'|^{\frac{\alpha+p}{p}}\left(log\frac{R}{|x'|}\right)^{\frac{\beta+1}{p}}},\frac{x'\cdot\nabla_{H}f}{|x'|^{\frac{\alpha+p}{p}}\left(log\frac{R}{|x'|}\right)^{\frac{\beta+1}{p}}}
\right.\\\left.+\frac{\beta+p}{p}\frac{f}{|x'|^{\frac{\alpha+p}{p}}\left(log\frac{R}{|x'|}\right)^{\frac{\beta+p+1}{p}}}\right)dx.  
\end{multline*}
\end{enumerate}

\end{theorem}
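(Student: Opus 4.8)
The plan is to adapt, essentially line by line, the arguments of Theorem~\ref{th_ball} (the cylindrical logarithmic case) and Theorem~\ref{log_hom} (the homogeneous logarithmic case), replacing the Euclidean divergence on $\mathbb{R}^{k}$ by the horizontal divergence $\operatorname{div}_{H}$, the radial derivative $x'\cdot\nabla_{k}$ by $x'\cdot\nabla_{H}=\sum_{j=1}^{N}x'_{j}X_{j}$, and the dimension $k$ by $N$, the dimension of the first stratum. As before, the case $\beta=-p$ is trivial, so throughout we assume $\beta\neq-p$.

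The first step is the divergence identity
\begin{equation*}
\operatorname{div}_{H}\left(\frac{x'}{|x'|^{\alpha+p}\left(\log\frac{R}{|x'|}\right)^{\beta+p}}\right)=\frac{N-\alpha-p}{|x'|^{\alpha+p}\left(\log\frac{R}{|x'|}\right)^{\beta+p}}+\frac{\beta+p}{|x'|^{\alpha+p}\left(\log\frac{R}{|x'|}\right)^{\beta+p+1}}.
\end{equation*}
Its only group-theoretic ingredient is that, by the explicit form \eqref{divergence} of the generators, each $X_{j}$ acts on a function of the first-stratum variable $x'$ alone exactly as $\partial/\partial x'_{j}$; hence $\sum_{j=1}^{N}X_{j}x'_{j}=N$ and $X_{j}|x'|=x'_{j}/|x'|$, so the identity reduces to the one-variable chain rule already used for \eqref{div_H} together with $\frac{d}{dr}\log\frac{R}{r}=-\frac{1}{r}$. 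Since $f\in C_{0}^{\infty}(B'(0,R)\backslash\{x'=0\})$, the field $|f|^{p}\,x'/\big(|x'|^{\alpha+p}(\log\frac{R}{|x'|})^{\beta+p}\big)$ is compactly supported away from $\{x'=0\}$ and from $\{|x'|=R\}$, so the divergence theorem on $\mathbb{G}$ applies and produces, exactly as in \eqref{used_int_by_part},
\begin{multline*}
\int_{B'(0,R)}\frac{|f|^{p}}{|x'|^{\alpha+p}\left(\log\frac{R}{|x'|}\right)^{\beta+p+1}}\,dx=-\frac{p}{\beta+p}\operatorname{Re}\int_{B'(0,R)}\frac{|f|^{p-2}f\,\overline{x'\cdot\nabla_{H}f}}{|x'|^{\alpha+p}\left(\log\frac{R}{|x'|}\right)^{\beta+p}}\,dx\\-\frac{N-\alpha-p}{\beta+p}\int_{B'(0,R)}\frac{|f|^{p}}{|x'|^{\alpha+p}\left(\log\frac{R}{|x'|}\right)^{\beta+p}}\,dx.
\end{multline*}

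From here part (i) follows exactly as for \eqref{ball_id_ineq}: bound the first term on the right by H\"older's inequality with exponents $p$ and $p'=p/(p-1)$, splitting the weight $|x'|^{-(\alpha+p)}(\log\frac{R}{|x'|})^{-(\beta+p)}$ between the two factors, and discard the last term, which is nonnegative thanks to the hypothesis $(N-\alpha-p)(\beta+p)\geq0$; this yields \eqref{ball_id_ineq_strat}. For $\beta\neq-p$ the sharpness of $\left|\frac{\beta+p}{p}\right|^{p}$ is witnessed, as in Theorem~\ref{th_ball}, by (suitable truncations of) the function $h(x)=\big(\log\frac{R}{|x'|}\big)^{(\beta+p)/p}$, which saturates both H\"older steps.

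For part (ii) I would take $\xi$ and $\eta$ to be the quantities displayed in \eqref{log_stratified_id}, so that $\xi-\eta=-\frac{\beta+p}{p}\,f\,|x'|^{-(\alpha+p)/p}\big(\log\frac{R}{|x'|}\big)^{-(\beta+p+1)/p}$, expand the algebraic definition of $C_{p}$ (cf.\ \eqref{Cp_ball}) just as in \eqref{used_not_s}, integrate over $B'(0,R)$, and substitute the integration-by-parts identity above for the surviving cross term $\operatorname{Re}\int|f|^{p-2}f\,\overline{x'\cdot\nabla_{H}f}\,(\cdots)\,dx$. Gathering the powers of $\frac{\beta+p}{p}$ then yields the coefficient $(N-\alpha-p)\big(\frac{\beta+p}{p}\big)\big|\frac{\beta+p}{p}\big|^{p-2}$ of the lower-order logarithmic integral and hence \eqref{log_stratified_id}. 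Finally, for $p\geq2$, \eqref{Cp_cylind_proof} gives $C_{p}(\xi,\eta)=0$ if and only if $\eta=0$, i.e.\ $\partial_{|x'|}\big((\log\frac{R}{|x'|})^{-(\beta+p)/p}f\big)=0$, which forces $f=\big(\log\frac{R}{|x'|}\big)^{(\beta+p)/p}\varphi\big(\frac{x'}{|x'|},x''\big)$; when $N=\alpha+p$ the integrand on the left of \eqref{ball_id_ineq_strat} then equals $|\varphi|^{p}|x'|^{-N}(\log\frac{R}{|x'|})^{-1}$, which is non-integrable on $B'(0,R)$, so that side is finite only if $f=0$ and the constant is not attained. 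The one genuinely delicate point is the divergence step: one must check that the drift terms of \eqref{divergence} contribute nothing to $\operatorname{div}_{H}$ of the purely $x'$-dependent field above — this is precisely the logarithmically weighted analogue of \eqref{div_H} — and that the divergence theorem is legitimately applied given the support of $f$; the remaining steps are the $L^{p}$ bookkeeping already carried out for Theorems~\ref{th_ball} and~\ref{log_hom}.
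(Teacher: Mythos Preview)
Your proposal is correct and follows exactly the route the paper intends: the paper gives no separate proof of Theorem~\ref{TH_log_st}, leaving it implicit that one repeats the arguments of Theorems~\ref{th_ball} and~\ref{log_hom} with $\operatorname{div}_{H}$, $x'\cdot\nabla_{H}$ and $N$ in place of $\operatorname{div}_{k}$, $x'\cdot\nabla_{k}$ and $k$, which is precisely what you do. Your observation that the drift terms in \eqref{divergence} annihilate functions of $x'$ alone is the only genuinely stratified-group ingredient, and it is handled correctly.
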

\begin{remark}
   The inequality \eqref{ball_id_ineq_strat} implies \cite[Theorem 4.1]{RS17_strat} when $p=N$, $\beta=-1$ and $\alpha=0$. Moreover, the identity \eqref{log_stratified_id} gives sharp remainder formula for their result.
\end{remark}
\section{Applications}\label{Section7}\label{section5}
In this section we discuss more general cylindrical Caffarelli-Kohn-Nirenberg (CKN) inequalities with explicit constants and remainder terms, which imply the cylindrical Heisenberg-Pauli-Weyl uncertainty principle.

Let us begin with the CKN inequality on $\mathbb{R}^{k}\times \mathbb{R}^{n-k}$:
\begin{theorem}\label{Theorem_CKN}
   Let $x=(x',x'')\in \mathbb{R}^{k}\times\mathbb{R}^{n-k}$,$1\leq k \leq n$, $1<p,q<\infty$, $0<r<\infty$, with $p+q\geq r$, $\delta\in [0,1]\cap\left[\frac{r-q}{r},\frac{p}{r}\right]$ and $ \alpha,b,c \in \mathbb{R}$. Assume that 
$ \frac{\delta r}{p}+\frac{(1-\delta)r}{q} = 1,\quad c=-\delta+b(1-\delta)$. Let $|\cdot|_{k}$ be the Euclidean norm on $\mathbb{R}^{k}$.
Then for any complex-valued function $f \in C^{\infty}_{0}(\mathbb{R}^{n}\backslash\{x'=0\})$, we have  
\begin{multline}\label{CKN_cyl}
    \left|\frac{k-\alpha}{p}\right|^{\delta}\left\||x'|_{k}^{\frac{\alpha c}{p}}f\right\|_{{L^r} {(\mathbb{R}^{n})}}\leq
    \left[\left\|\frac {x' \cdot \nabla_{k}}{|x'|^{\frac{\alpha}{p}}_{k}}f\right\|_{{L^p}(\mathbb {R}^{n})}^{p}\right.\\\left.-\int_{\mathbb{R}^{n}}C_{p}\left( \frac{x'\cdot \nabla_{k}}{|x'|^{\frac{\alpha}{p}}_{k}}f, \frac{x'\cdot \nabla_{k}}{|x'|^{\frac{\alpha}{p}}_{k}}f+\frac{k-\alpha}{p}\frac{f}{|x'|_{k}^{\frac{\alpha}{p}}}\right)dx\right]^{\frac{\delta}{p}}
    \left\||x'|_{k}^{\frac{\alpha b}{p}}f\right\|_{{L^q}(\mathbb {R}^{n})}^{1-\delta}.  
\end{multline}
Moreover, the constant $\left|\frac{k-\alpha}{p}\right|^{\delta}$ with $\alpha\neq k$ is sharp when $q=p$ with $b=-1$ or $\delta=\overline{0,1}$.
\end{theorem}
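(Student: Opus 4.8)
The plan is to obtain \eqref{CKN_cyl} by combining the $L^{p}$-Hardy identity \eqref{Lp_identity_cyl} of Theorem \ref{Th_Lp_Hardy_cyl} with a weighted H\"older interpolation. First, by \eqref{Lp_identity_cyl} the expression in square brackets on the right-hand side of \eqref{CKN_cyl} equals $\left|\tfrac{k-\alpha}{p}\right|^{p}\left\||x'|_{k}^{-\alpha/p}f\right\|_{L^{p}(\mathbb{R}^{n})}^{p}$, so raising it to the power $\delta/p$ shows that \eqref{CKN_cyl} is equivalent — after cancelling $\left|\tfrac{k-\alpha}{p}\right|^{\delta}$, the case $\alpha=k$ being trivial since then the left-hand side vanishes — to the weighted interpolation estimate
$$\left\||x'|_{k}^{\alpha c/p}f\right\|_{L^{r}(\mathbb{R}^{n})}\le\left\||x'|_{k}^{-\alpha/p}f\right\|_{L^{p}(\mathbb{R}^{n})}^{\delta}\left\||x'|_{k}^{\alpha b/p}f\right\|_{L^{q}(\mathbb{R}^{n})}^{1-\delta}.$$

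To prove this, I would use that $\delta+(1-\delta)=1$ together with the hypothesis $c=-\delta+b(1-\delta)$ to write the pointwise factorization $|x'|_{k}^{\alpha c/p}|f|=\bigl(|x'|_{k}^{-\alpha/p}|f|\bigr)^{\delta}\bigl(|x'|_{k}^{\alpha b/p}|f|\bigr)^{1-\delta}$, raise it to the $r$-th power, integrate over $\mathbb{R}^{n}$, and apply H\"older's inequality with the exponents $\tfrac{p}{\delta r}$ and $\tfrac{q}{(1-\delta)r}$. These exponents are admissible precisely because of the standing assumptions: $\tfrac{\delta r}{p}+\tfrac{(1-\delta)r}{q}=1$ makes them H\"older conjugate, and $\delta\le\tfrac{p}{r}$, $1-\delta\le\tfrac{q}{r}$ (both contained in $\delta\in[\tfrac{r-q}{r},\tfrac{p}{r}]$) ensure $\tfrac{p}{\delta r}\ge1$ and $\tfrac{q}{(1-\delta)r}\ge1$. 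Taking $r$-th roots gives the estimate. The degenerate endpoints $\delta=1$ (which forces $r=p$, $c=-1$, so the estimate is just \eqref{Lp_inequality_cyl}) and $\delta=0$ (which forces $r=q$, $c=b$, an equality) should be recorded separately but are immediate.

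For the sharpness of $\left|\tfrac{k-\alpha}{p}\right|^{\delta}$ with $\alpha\ne k$: if $q=p$ and $b=-1$, the constraints force $r=p$ and $c=-1$, so both norms on the right of the interpolation estimate collapse to $\left\||x'|_{k}^{-\alpha/p}f\right\|_{L^{p}}$, and \eqref{CKN_cyl} becomes exactly the $\delta$-th power of the Hardy identity \eqref{Lp_identity_cyl} after dropping the nonnegative remainder $\int C_{p}\ge0$; thus the sharpness of $\left|\tfrac{k-\alpha}{p}\right|^{p}$ already established in Theorem \ref{Th_Lp_Hardy_cyl} — witnessed by suitable truncations and mollifications of $|x'|_{k}^{-(k-\alpha)/p}$ — directly yields sharpness of $\left|\tfrac{k-\alpha}{p}\right|^{\delta}$ here. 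For $\delta\in\{0,1\}$ sharpness follows from the same reduction, with $\delta=0$ giving the trivial optimal constant $1$.

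There is no serious obstacle here once \eqref{Lp_identity_cyl} is available; the only points requiring care are matching the admissibility of the H\"older exponents with the exact range of $\delta$ (including the degenerate endpoints $\delta\in\{0,1\}$), and making the near-extremizer family for the sharpness assertion rigorous: one truncates $|x'|_{k}^{-(k-\alpha)/p}$ away from $\{x'=0\}$ and from infinity, mollifies, and checks both that the ratio of the two sides of \eqref{CKN_cyl} tends to $\left|\tfrac{k-\alpha}{p}\right|^{\delta}$ and that the remainder term $\int C_{p}$ contributes negligibly along this family.
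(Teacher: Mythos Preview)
Your main argument is correct and is exactly the paper's approach: reduce the bracket via the identity \eqref{Lp_identity_cyl} and then establish the weighted interpolation by H\"older with exponents $p/(\delta r)$ and $q/((1-\delta)r)$, treating $\delta=0$ and $\delta=1$ separately.

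The sharpness discussion, however, is overcomplicated and slightly confused. When $q=p$ and $b=-1$ the constraints give $r=p$, $c=-1$, and then the two factors in the H\"older step are \emph{identical}, so H\"older's inequality is an equality for every $f$. Since you have already used the identity \eqref{Lp_identity_cyl} to rewrite the bracket as $\left|\tfrac{k-\alpha}{p}\right|^{p}\left\||x'|_{k}^{-\alpha/p}f\right\|_{L^{p}}^{p}$ exactly, \eqref{CKN_cyl} is in fact an \emph{equality} for every $f\in C_0^\infty(\mathbb{R}^n\setminus\{x'=0\})$ in this regime; this is precisely what the paper observes. There is therefore no need to construct a near-extremizer family, and in particular no need to check that ``the remainder term $\int C_{p}$ contributes negligibly'': that remainder is already fully absorbed by the identity you invoked at the outset, so there is nothing left to control.
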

\begin{remark}
    As we discussed in \eqref{CKN_improved} above, Theorem \ref{Theorem_CKN} is not covered by the classical CKN inequality in general. It also provides its cylindrical extension with an explicit constant and a remainder term. Moreover, in the special case, it implies cylindrical extension of the classical Heisenberg-Pauli-Weyl uncertainty principle (see \eqref{C1f}).
\end{remark}
\begin{proof}[Proof of Theorem \ref{Theorem_CKN}]
Case $\delta=0$. In this case, we have $q=r$ and $b=c$ by $\frac{\delta r}{p}+{\frac{(1-\delta)r}{q}}=1$ and $c=\delta(a-1)+b(1-\delta)$, respectively. Then, the inequality \eqref{CKN_cyl} reduces to the trivial estimate 
\begin{equation}
    \label{triv_iden}
\left\||x'|_{k}^{\frac{\alpha c}{p}}f\right\|_{{L^r}(\mathbb {R}^{n})}\leq\left\||x'|_{k}^{\frac{\alpha b}{p}}f\right\|_{{L^q}(\mathbb {R}^{n})}.
\end{equation}
Case $\delta=1$. Notice that in this case, we have $p=r$, $c=-1$ and
\begin{multline*}
\left\||x'|_{k}^{\frac{\alpha c}{p}}f\right\|_{{L^r}(\mathbb {R}^{n})}\leq\left|\frac{p}{k-\alpha}\right|\left[\left\|\frac {x' \cdot \nabla_{k}}{|x'|^{\frac{\alpha}{p}}_{k}}f\right\|_{{L^p}(\mathbb {R}^{n})}^{p}\right.\\\left.-\int_{\mathbb{R}^{n}}C_{p}\left( \frac{x'\cdot \nabla_{k}}{|x'|^{\frac{\alpha}{p}}_{k}}f, \frac{x'\cdot \nabla_{k}}{|x'|^{\frac{\alpha}{p}}_{k}}f+\frac{k-\alpha}{p}\frac{f}{|x'|_{k}^{\frac{\alpha}{p}}}\right)dx\right]^{\frac{1}{p}},
\end{multline*}
which follows from Part (ii) of Theorem \ref{Th_Lp_Hardy_cyl}.

Thus, it is sufficient to consider the case $\delta\in(0,1)\cap \left[{\frac{r-q}{r},\frac{p}{r}} \right]$. Taking into account $c=-\delta+b(1-\delta)$, a direct calculation gives
$$
  \left\||x'|_{k}^{\frac{\alpha c}{p}}f\right\|_{{L^r}(\mathbb {R}^{n})}=\left(\int_{\mathbb{R}^{n}} {|x'|_{k}^{\frac{\alpha c r}{p}}}|f(x)|^{r} dx\right)^\frac{1}{r}=
\left(\int_{\mathbb{R}^{n}}\frac{|f(x)|^{\delta r}}{|x'|_{k}^{\frac{\alpha \delta r}{p}}}\cdot\frac{|f(x)|^{\left(1-\delta\right)r}}{|x'|_{k}^{\frac{-\alpha br(1-\delta)}{p}}}dx\right)^{\frac{1}{r}}.
$$ 
Since we have $\delta\in(0,1)\cap{\left[\frac{r-q}{r},\frac{p}{r}\right]}$ and $p+q\geq r$, then by using H\"older's inequality for $\frac{\delta r}{p}+{\frac{(1-\delta)r}{q}}=1$, we obtain 
\begin{equation}\label{hold_equal_CKN}
\begin{split}
\left\||x'|_{k}^{\frac{\alpha c}{p}}f\right\|_{{L^r}(\mathbb{R}^{n})}&\leq\left(\int_{\mathbb{R}^{n}}\frac{|f(x)|^{p}}{|x'|_{k}^{\alpha}} dx\right)^{\frac{\delta}{p}}\left(\int_{\mathbb{R}^{n}}\frac{|f(x)|^{q}}{|x'|_{k}^{\frac{-\alpha bq}{p}}} dx\right)^{\frac{(1-\delta)}{q}}\\&=\left\|\frac{f}{|x'|_{k}^{\frac{\alpha}{p}}}\right\|_{L^p({\mathbb{R}^{n}})}^{\delta}\left\|\frac{f}{{|x'|_{k}}^{-\frac{\alpha b}{p}}}\right\|_{L^q(\mathbb{R}^{n})}^{1-\delta}.
\end{split}
\end{equation}

By \eqref{Lp_identity_cyl} of Theorem \ref{Th_Lp_Hardy_cyl} we obtain 
\begin{multline*}
\left\||x'|_{k}^{\frac{\alpha c}{p}}f\right\|_{L^{r}(\mathbb {R}^{n})}\leq{\left|\frac{p}{k-\alpha}\right|^\delta} \left[\left\|\frac {x' \cdot \nabla_{k}}{|x'|^{\frac{\alpha}{p}}_{k}}f\right\|_{{L^p}(\mathbb {R}^{n})}^{p}\right.\\\left.-\int_{\mathbb{R}^{n}}C_{p}\left( \frac{x'\cdot \nabla_{k}}{|x'|^{\frac{\alpha}{p}}_{k}}f, \frac{x'\cdot \nabla_{k}}{|x'|^{\frac{\alpha}{p}}_{k}}f+\frac{k-\alpha}{p}\frac{f}{|x'|_{k}^{\frac{\alpha}{p}}}\right)dx\right]^{\frac{\delta}{p}}\left\||x'|_{k}^{\frac{\alpha b}{p}}f\right\|_{L^{q}(\mathbb {R}^{n})}^{1-\delta},
\end{multline*}
which is \eqref{CKN_cyl}.

Let us now discuss sharpness of the constant $\left|\frac{p}{k-\alpha}\right|^\delta$ in the inequality \eqref{CKN_cyl}.  
Note that in \eqref{hold_equal_CKN} when $q=p$ and $b=-1$, H\"older's equality condition is satisfied for any function. Therefore, we can conclude that the constant $\left|\frac{p}{k-\alpha}\right|^\delta$ in \eqref{CKN_cyl} is sharp when $q=p$ and $b=-1$. There is nothing to prove in the cases $\delta=0$ and $\delta=1$, since in these cases the inequality \eqref{CKN_cyl} reduces to the identities \eqref{triv_iden} and \eqref{Lp_identity_cyl}, respectively. 
\end{proof}
In the exactly same way as above, but using Theorems \ref{Th_Lp_stratified} and \ref{Th_Lp_hom} instead of Theorem \ref{Th_Lp_Hardy_cyl} one can derive the following Theorems \ref{CKN_stratified} and \ref{CKN_homogeneous}, respectively.

\begin{theorem}\label{CKN_stratified}
Let $\mathbb{G}$ be a stratified Lie group with $\textit{N}$ being the dimension of the first stratum. We denote by $x'$ the variables from the first stratum of $\mathbb{G}$. Let $1<p,q<\infty$, $0<r<\infty$ with $p+q\geq r$ and $\delta\in[0,1]\cap\left[\frac{r-q}{r},\frac{p}{r}\right]$ and $\alpha,b,c\in\mathbb{R}$. Assume that ${\frac{\delta r}{p}}+{\frac{(1-\delta)r}{q}}=1$ and $c=-\delta+b(1-\delta)$. 
Then for all complex-valued functions $f\in C_0^{\infty}(\mathbb{G}\backslash \{0\})$ we have  
\begin{multline}\label{CKN_2}
    \left|\frac{N-\alpha}{p}\right|^{\delta}\left\||x'|^{\frac{\alpha c}{p}}f\right\|_{{L^r} {(\mathbb{G})}}\leq
    \left[\left\|\frac {x' \cdot \nabla_{H}}{|x'|^{\frac{\alpha}{p}}}f\right\|_{{L^p}(\mathbb {G})}^{p}\right.\\\left.-\int_{\mathbb{G}}C_{p}\left( \frac{x\cdot \nabla_{H}}{|x|^{\frac{\alpha}{p}}}f, \frac{x\cdot \nabla_{H}}{|x|^{\frac{\alpha}{p}}}f+\frac{N-\alpha}{p}\frac{f}{|x|^{\frac{\alpha}{p}}}\right)dx\right]^{\frac{\delta}{p}}
    \left\||x'|^{\frac{\alpha b}{p}}f\right\|_{{L^q}(\mathbb {G})}^{1-\delta}.  
\end{multline}
Moreover, the constant $\left|\frac{N-\alpha}{p}\right|^{\delta}$ with $\alpha\neq N$ is sharp when $q=p$ with $b=-1$ or $\delta=\overline{0,1}$.
\end{theorem}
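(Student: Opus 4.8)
The plan is to run the proof of Theorem~\ref{Theorem_CKN} essentially verbatim, with the stratified Hardy identity \eqref{Lp_stratified} of Theorem~\ref{Th_Lp_stratified} playing the role of the cylindrical identity \eqref{Lp_identity_cyl}. No new idea is needed; the work is entirely in the exponent bookkeeping and in tracking where the hypotheses on $\delta$ enter.

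First I would dispose of the two boundary values of $\delta$. If $\delta=0$, then $\frac{\delta r}{p}+\frac{(1-\delta)r}{q}=1$ forces $q=r$ and $c=-\delta+b(1-\delta)$ forces $c=b$, so \eqref{CKN_2} collapses to the trivial identity $\| |x'|^{\frac{\alpha c}{p}}f\|_{L^r(\mathbb{G})}=\| |x'|^{\frac{\alpha b}{p}}f\|_{L^q(\mathbb{G})}$. If $\delta=1$, then $p=r$ and $c=-1$, and \eqref{CKN_2}, after raising both sides to the power $1/p$, is precisely the identity \eqref{Lp_stratified}. In both extreme cases the stated constant occurs in an identity, so it is sharp there.

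For $\delta\in(0,1)\cap[\frac{r-q}{r},\frac{p}{r}]$ I would use $c=-\delta+b(1-\delta)$ to split
\[
\left\| |x'|^{\frac{\alpha c}{p}}f\right\|_{L^r(\mathbb{G})}^r=\int_{\mathbb{G}}\frac{|f|^{\delta r}}{|x'|^{\frac{\alpha\delta r}{p}}}\cdot\frac{|f|^{(1-\delta)r}}{|x'|^{-\frac{\alpha b r(1-\delta)}{p}}}\,dx,
\]
and then apply H\"older's inequality with exponents $\frac{p}{\delta r}$ and $\frac{q}{(1-\delta)r}$. These are conjugate thanks to $\frac{\delta r}{p}+\frac{(1-\delta)r}{q}=1$, and both are $\geq 1$ precisely because $\delta r\leq p$ and $(1-\delta)r\leq q$, i.e. $\delta\in[\frac{r-q}{r},\frac{p}{r}]$; the hypothesis $p+q\geq r$ is exactly what makes this admissible interval nonempty. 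The result is
\[
\left\| |x'|^{\frac{\alpha c}{p}}f\right\|_{L^r(\mathbb{G})}\leq\left\|\frac{f}{|x'|^{\frac{\alpha}{p}}}\right\|_{L^p(\mathbb{G})}^{\delta}\left\| |x'|^{\frac{\alpha b}{p}}f\right\|_{L^q(\mathbb{G})}^{1-\delta}.
\]
I would then substitute the identity \eqref{Lp_stratified} for $\|f/|x'|^{\alpha/p}\|_{L^p(\mathbb{G})}^{p}$ inside the first factor and raise the outcome to the power $\delta/p$, which is exactly \eqref{CKN_2} together with its $C_p$-remainder.

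For the sharpness claim I would note that when $q=p$ and $b=-1$ the normalization $\frac{\delta r}{p}+\frac{(1-\delta)r}{q}=1$ forces $r=p$, and then $c=-1$, so the two factors produced by H\"older reduce to $\|f/|x'|^{\alpha/p}\|_{L^p(\mathbb{G})}^{\delta}$ and $\|f/|x'|^{\alpha/p}\|_{L^p(\mathbb{G})}^{1-\delta}$; in the splitting used above the two integrands are then proportional, so H\"older is an equality for every $f$, and the only possible strictness lies in the constant $\left|\frac{N-\alpha}{p}\right|^{p}$ of the stratified $L^p$-Hardy inequality obtained from \eqref{Lp_stratified} by dropping the $C_p$-term, which is sharp (see the remark after Theorem~\ref{Th_Lp_stratified} and \cite{RSY17_strat}). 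Together with the identities at $\delta\in\{0,1\}$, this yields the sharpness of $\left|\frac{N-\alpha}{p}\right|^{\delta}$ in the stated cases. I do not expect a genuine obstacle here; the only point that needs care is the internal consistency of the parameter constraints --- that the prescribed range of $\delta$ simultaneously keeps the H\"older exponents $\geq 1$ and respects $\frac{\delta r}{p}+\frac{(1-\delta)r}{q}=1$ --- which is exactly what $p+q\geq r$ and $\delta\in[0,1]\cap[\frac{r-q}{r},\frac{p}{r}]$ encode.
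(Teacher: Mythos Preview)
Your proposal is correct and mirrors the paper exactly: the paper derives Theorem~\ref{CKN_stratified} ``in the exactly same way'' as Theorem~\ref{Theorem_CKN}, with Theorem~\ref{Th_Lp_stratified} in place of Theorem~\ref{Th_Lp_Hardy_cyl}. One small clarification on the sharpness argument at $q=p$, $b=-1$: since \eqref{CKN_2} retains the $C_p$-remainder and \eqref{Lp_stratified} is an identity, once H\"older is an equality the whole of \eqref{CKN_2} becomes an equality for every $f$, so no separate appeal to the sharp Hardy constant is needed.
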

\begin{remark}
     Taking into account $C_p\geq 0$ by \eqref{Cp_} and dropping the last term in \eqref{CKN_2}, we obtain \cite[Theorem 4.1]{RSY17_strat}. 
\end{remark}

\begin{theorem}\label{CKN_homogeneous}
Let $\mathbb{G}$ be a homogeneous Lie group of homogeneous dimension $Q$
and let $|\cdot|$ be any homogeneous quasi-norm on $\mathbb{G}$. Let $1<p,q<\infty$,$0<r<\infty$ with $p+q\geq r$ and $\delta\in[0,1]\cap\left[\frac{r-q}{r},\frac{p}{r}\right]$ and $\alpha,b,c\in\mathbb{R}$. Assume that ${\frac{\delta r}{p}}+{\frac{(1-\delta)r}{q}}=1$ and $c=-\delta+b(1-\delta)$. 
Then for all complex-valued functions $f\in C_0^{\infty}(\mathbb{G}\backslash \{0\})$ we have  
\begin{multline}\label{CKN_homo}
\left|\frac{Q-\alpha}{p}\right|^{\delta}\left\||x|^{\frac{\alpha c}{p}}f\right\|_{{L^r} {(\mathbb{G})}}\leq
\left[\left\|\frac {\mathcal{R}_{|x|}f}{|x|^{\frac{\alpha}{p}-1}}\right\|_{{L^p}(\mathbb {G})}^{p}\right.\\\left.-\int_{\mathbb{G}}C_{p}\left( \frac{\mathcal{R}_{|x|}f}{|x|^{\frac{\alpha}{p}-1}}f, \frac{\mathcal{R}_{|x|}f}{|x|^{\frac{\alpha}{p}-1}}f+\frac{Q-\alpha}{p}\frac{f}{|x|^{\frac{\alpha}{p}}}\right)dx\right]^{\frac{\delta}{p}}\left\||x|^{\frac{\alpha b}{p}}f\right\|_{{L^q}(\mathbb {G})}^{1-\delta}.  
\end{multline}
Moreover, the constant $\left|\frac{Q-\alpha}{p}\right|^{\delta}$ with $\alpha\neq N$ is sharp for any homogeneous quasi-norm $|\cdot|$ when $q=p$ with $b=-1$ or $\delta=\overline{0,1}$.
\end{theorem}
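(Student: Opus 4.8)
The plan is to run the proof of Theorem~\ref{Theorem_CKN} essentially verbatim, with the homogeneous $L^p$-Hardy identity \eqref{Lp_hom_id} of Theorem~\ref{Th_Lp_hom} playing the role of the cylindrical identity \eqref{Lp_identity_cyl}. First I would dispose of the two degenerate values of $\delta$. If $\delta=0$, the relations $\frac{\delta r}{p}+\frac{(1-\delta)r}{q}=1$ and $c=-\delta+b(1-\delta)$ force $q=r$ and $b=c$, so \eqref{CKN_homo} collapses to the trivial identity $\||x|^{\alpha c/p}f\|_{L^r(\mathbb{G})}=\||x|^{\alpha b/p}f\|_{L^q(\mathbb{G})}$. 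If $\delta=1$, they force $p=r$ and $c=-1$, and \eqref{CKN_homo} is exactly the $p$-th root of identity \eqref{Lp_hom_id}. In both cases there is nothing further to prove, and the sharpness assertion for $\delta=\overline{0,1}$ is vacuous.

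For $\delta\in(0,1)\cap\left[\frac{r-q}{r},\frac{p}{r}\right]$ I would use $c=-\delta+b(1-\delta)$ to write
$$
\left\||x|^{\frac{\alpha c}{p}}f\right\|_{L^r(\mathbb{G})}^{r}=\int_{\mathbb{G}}\left(\frac{|f(x)|^{p}}{|x|^{\alpha}}\right)^{\frac{\delta r}{p}}\left(|x|^{\frac{\alpha b q}{p}}|f(x)|^{q}\right)^{\frac{(1-\delta)r}{q}}dx,
$$
and apply H\"older's inequality with the conjugate exponents $\frac{p}{\delta r}$ and $\frac{q}{(1-\delta)r}$. This step is legitimate precisely because $\delta\le\frac{p}{r}$ makes the first exponent at least $1$, $\delta\ge\frac{r-q}{r}$ makes the second at least $1$, and $\frac{\delta r}{p}+\frac{(1-\delta)r}{q}=1$ makes them conjugate; the hypothesis $p+q\ge r$ is what keeps the admissible interval for $\delta$ nonempty. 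The outcome is
$$
\left\||x|^{\frac{\alpha c}{p}}f\right\|_{L^r(\mathbb{G})}\le\left\|\frac{f}{|x|^{\alpha/p}}\right\|_{L^p(\mathbb{G})}^{\delta}\left\||x|^{\frac{\alpha b}{p}}f\right\|_{L^q(\mathbb{G})}^{1-\delta}.
$$
Substituting $\|f/|x|^{\alpha/p}\|_{L^p(\mathbb{G})}^{p}$ by the right-hand side of \eqref{Lp_hom_id} (namely $\|\mathcal{R}_{|x|}f/|x|^{\alpha/p-1}\|_{L^p(\mathbb{G})}^{p}$ minus the nonnegative $C_p$-remainder, the whole multiplied back by $|p/(Q-\alpha)|^{p}$) and raising to the power $\delta/p$ produces exactly \eqref{CKN_homo}.

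For the sharpness claim I would argue as in Theorem~\ref{Theorem_CKN}: when $q=p$ and $b=-1$ the two bracketed factors in the H\"older step coincide, so that step becomes an equality for every $f$; hence the constant $\left|\frac{Q-\alpha}{p}\right|^{\delta}$ in \eqref{CKN_homo} is sharp if and only if $\left|\frac{Q-\alpha}{p}\right|^{p}$ is sharp in the homogeneous $L^p$-Hardy inequality \eqref{Lp_hom_id_ineq}, which (for $\alpha\ne Q$ and for any homogeneous quasi-norm) is recorded in the remark following Theorem~\ref{Th_Lp_hom}.

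I do not expect a genuine obstacle: every analytic ingredient — the polar-coordinate integration by parts that yields \eqref{Lp_hom_id}, the inequality $C_p\ge 0$, and the near-extremizer $|x|^{-(Q-\alpha)/p}$ for the Hardy constant — is already supplied by Theorem~\ref{Th_Lp_hom}. The only point needing a little care is bookkeeping the admissibility of the H\"older exponents against the constraints on $\delta,p,q,r$, and that computation is identical to the cylindrical one carried out for Theorem~\ref{Theorem_CKN}.
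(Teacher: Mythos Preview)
Your proposal is correct and follows precisely the paper's own approach: the paper states that Theorem~\ref{CKN_homogeneous} is proved ``in the exactly same way'' as Theorem~\ref{Theorem_CKN}, replacing Theorem~\ref{Th_Lp_Hardy_cyl} by Theorem~\ref{Th_Lp_hom}, and you carry out exactly that substitution with the same case split on $\delta$, the same H\"older step, and the same sharpness argument via $q=p$, $b=-1$.
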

\begin{remark}
     Taking into account $C_p\geq 0$ by \eqref{Cp_} and dropping the last term in \eqref{CKN_homo}, we obtain \cite[Formula (7.1) of Theorem 7.1]{RSY18} and \cite[Formula (6) of Theorem 2.1]{RSY17}. 
\end{remark}
\bibliographystyle{plain}
\bibliography{main}

\begin{thebibliography}{10}

\bibitem{BEHL08}
D.~Hundertmark A.~Balinsky, W. D.~Evans and R.~T. Lewis.
\newblock On inequalities of {H}ardy-{S}obolev type.
\newblock {\em Banach J. Math. Anal.}, 2(2):94--106, 2008.

\bibitem{BT}
M.~Badiale and G.~Tarantello.
\newblock A {S}obolev-{H}ardy inequality with applications to a nonlinear elliptic equation arising in astrophysics.
\newblock {\em Arch. Ration. Mech. Anal.}, 163:259--293, 2002.

\bibitem{Ber00}
G.~Bertin.
\newblock {\em Dynamics of Galaxies}.
\newblock Cambridge University Press, Cambridge, 2000.

\bibitem{BT19}
J.~Byeon and F.~Takahashi.
\newblock Hardy’s inequality in a limiting case on general bounded domains.
\newblock {\em Commun. Contemp. Math.}, 21(08):Art. No. 1850070, 2019.

\bibitem{CKN}
L.~Caffarelli, R.~Kohn, and L.~Nirenberg.
\newblock First order interpolation inequalities with weights.
\newblock {\em Compositio Math.}, 53:259--275, 1984.

\bibitem{CFLL22}
C.~Cazacu, J.~Flynn, N.~Lam, and G.~Lu.
\newblock Caffarelli-{K}ohn-{N}irenberg identities, inequalities and their stabilities.
\newblock {\em arXiv:2211.14622}, 2022.

\bibitem{CKLL24}
C.~Cazacu, D.~Krejčiřík, N.~Lam, and A.~Laptev.
\newblock Hardy inequalities for magnetic $p$-{L}aplacians.
\newblock {\em Nonlinearity}, 37:Art. No. 035004, 2024.

\bibitem{ChZh}
L.~Chen, G.~Lu, and C.~Zhang.
\newblock Maximizers for fractional {C}affarelli-{K}ohn-{N}irenberg and {T}rudinger--{M}oser inequalities on the fractional {S}obolev spaces.
\newblock {\em J. Geom. Anal.}, 31:3556--3582, 2021.

\bibitem{Cio01}
L.~Ciotti.
\newblock {\em Dynamical models in astrophysics}.
\newblock Scuola Normale Superiore, Pisa, 2001.

\bibitem{DEFT}
J.~Dolbeault, M.~J. Esteban, S.~Filippas, and A.~Tertikas.
\newblock Rigidity results with applications to best constants and symmetry of {C}affarelli-{K}ohn-{N}irenberg and logarithmic {H}ardy inequalities.
\newblock {\em Calc. Var. Partial Differential Equations.}, 54:2465--2481, 2015.

\bibitem{DEL}
J.~Dolbeault, M.~J. Esteban, and M.~Loss.
\newblock Rigidity versus symmetry breaking via nonlinear flows on cylinders and {E}uclidean spaces.
\newblock {\em Invent. Math.}, 206:397--440, 2016.

\bibitem{D}
M.~Dong.
\newblock Existence of extremal functions for higher-order {C}affarelli-{K}ohn-{N}irenberg inequalities.
\newblock {\em Adv. Nonlinear Stud.}, 18:543--553, 2018.

\bibitem{DN20}
N.T. Duy and H.B. Nguyen.
\newblock Cylindrical {H}ardy inequalities on half-spaces.
\newblock {\em Electron. J. Differential Equations}, 2020(Art. No. 75), 2020.

\bibitem{DP21}
N.T. Duy and L.L. Phi.
\newblock Cylindrical {H}ardy type inequalities with {B}essel pairs.
\newblock {\em Oper. Matrices}, 15(2):485--495, 2021.

\bibitem{FR16}
V.~Fischer and M.~Ruzhansky.
\newblock {\em Quantization on {N}ilpotent {L}ie groups}, volume 314 of {\em Progress in Mathematics}.
\newblock Birkh\aa user, 2016.

\bibitem{FS-book}
G.~B. Folland and E.~M. Stein.
\newblock {\em Hardy spaces on {H}omogeneous {G}roups}, volume~28 of {\em Mathematical Notes}.
\newblock Princeton University Press, 1982.

\bibitem{II14}
N.~Ioku and M.~Ishiwata.
\newblock A scale invariant form of a critical {H}ardy inequality.
\newblock {\em Int. Math. Res. Not. IMRN}, 2015(18):8830--8846, 2015.

\bibitem{Lam18}
N.~Lam.
\newblock Hardy and {H}ardy--{R}ellich type inequalities with {B}essel pairs.
\newblock {\em Ann. Fenn.}, 43(1):211--223, 2018.

\bibitem{LL}
N.~Lam and G.~Lu.
\newblock Sharp constants and optimizers for a class of {C}affarelli-{K}ohn-{N}irenberg inequalities.
\newblock {\em Adv. Nonlinear Stud.}, 17:457--480, 2017.

\bibitem{LRY_17}
A.~Laptev, M.~Ruzhansky, and N.~Yessirkegenov.
\newblock Hardy inequalities for {L}andau {H}amiltonian and for {B}aouendi-{G}rushin operator with {A}haronov-{B}ohm type magnetic field. {P}art {I}.
\newblock {\em Math. Scand.}, 125(2):239--269, 2019.

\bibitem{MOW13}
S.~Machihara, T.~Ozawa, and H.~Wadade.
\newblock Hardy type inequalities on balls.
\newblock {\em Tohoku Math. J. (2)}, 65(3):321--330, 2013.

\bibitem{MOW19}
S.~Machihara, T.~Ozawa, and H.~Wadade.
\newblock Remarks on the {H}ardy type inequalities with remainder terms in the framework of equalities.
\newblock {\em Adv. Stud. Pure Math.}, 81:247--258, 2019.

\bibitem{MS04}
G.~Mancini and K.~Sandeep.
\newblock Cylindrical symmetry of extremals of a {H}ardy-{S}obolev inequality.
\newblock {\em Ann. Mat. Pura Appl.}, 183(2):165--172, 2004.

\bibitem{Maz11}
V.~Maz’ya.
\newblock {\em Sobolev Spaces with Applications to Elliptic Partial Differential Equations. Second, revised and augmented edition. Grundlehren der Mathematischen Wissenschaften [Fundamental Principles of Mathematical Sciences], {V}ol. 342.}
\newblock Springer, 2011.

\bibitem{OS09}
T.~Ozawa and H.~Sasaki.
\newblock Inequalities associated with dilations.
\newblock {\em Commun. Contemp. Math.}, 11(02):265--277, 2009.

\bibitem{RS17_Adv}
M.~Ruzhansky and D.~Suragan.
\newblock Hardy and {R}ellich inequalities, identities, and sharp remainders on homogeneous groups.
\newblock {\em Adv. Math.}, 317:799--822, 2017.

\bibitem{RS}
M.~Ruzhansky and D.~Suragan.
\newblock Layer potentials, {K}ac’s problem, and refined {H}ardy inequality on homogeneous {C}arnot groups.
\newblock {\em Adv. Math.}, 308:483--528, 2017.

\bibitem{RS17_strat}
M.~Ruzhansky and D.~Suragan.
\newblock On horizontal {H}ardy, {R}ellich, {C}affarelli-{K}ohn-{N}irenberg and $p$-sub-{L}aplacian inequalities on stratified groups.
\newblock {\em J. Differential Equations}, 262:1799--1821, 2017.

\bibitem{RS-book}
M.~Ruzhansky and D.~Suragan.
\newblock {\em Hardy inequalities on homogeneous groups}, volume 327 of {\em Progress in Mathematics}.
\newblock Birkh\aa user, 2019.

\bibitem{RSY17_strat}
M.~Ruzhansky, D.~Suragan, and N.~Yessirkegenov.
\newblock Caffarelli-{K}ohn-{N}irenberg and {S}obolev type inequalities on stratified {L}ie groups.
\newblock {\em NoDEA Nonlinear Differential Equations Appl.}, 24, 2017.

\bibitem{RSY17}
M.~Ruzhansky, D.~Suragan, and N.~Yessirkegenov.
\newblock Extended {C}affarelli--{K}ohn--{N}irenberg inequalities and superweights for ${L}^{p}$-weighted {H}ardy inequalities.
\newblock {\em C. R. Math. Acad. Sci. Paris}, 355(6):694--698, 2017.

\bibitem{RSY18}
M.~Ruzhansky, D.~Suragan, and N.~Yessirkegenov.
\newblock Extended {C}affarelli-{K}ohn-{N}irenberg inequalities, and remainders, stability, and superweights for ${L}^{p}$-weighted {H}ardy inequalities.
\newblock {\em Trans. Amer. Math. Soc. Ser. B}, 5(2):32--62, 2018.

\bibitem{RSY_2018}
M.~Ruzhansky, D.~Suragan, and N.~Yessirkegenov.
\newblock Sobolev {T}ype {I}nequalities, {E}uler–{H}ilbert –{S}obolev and {S}obolev–{L}orentz–{Z}ygmund {S}paces on {H}omogeneous {G}roups.
\newblock {\em Integr. Equ. Oper. Theory}, 90:Art. No. 10, 2018.

\bibitem{Sano_arxiv}
M.~Sano.
\newblock Explicit optimal constants of two critical {R}ellich inequalities for radially symmetric functions.
\newblock {\em arXiv:2002.04768v2}, 2020.

\bibitem{SDM}
S.~Secchi, D.~Smets, and M.~Willem.
\newblock Remarks on a {H}ardy-{S}obolev inequality.
\newblock {\em C. R. Math. Acad. Sci.}, 336:811--815, 2003.

\end{thebibliography}

\end{document}